\documentclass[12pt,a4paper]{article}
\usepackage{indentfirst}
\setlength{\parskip}{3\lineskip}
\usepackage{amsmath,amssymb,amsfonts,amsthm,graphics}
\usepackage{makeidx}
\usepackage{ifpdf}
 \setlength{\parindent}{1em}

\setlength{\textwidth}{160mm} \setlength{\textheight}{23cm}
\setlength{\headheight}{0cm} \setlength{\topmargin}{0pt}
\setlength{\headsep}{0pt} \setlength{\oddsidemargin}{0pt}
\setlength{\evensidemargin}{0pt}
\newtheorem{thm}{Theorem}

\newtheorem{lem}{Lemma}

\theoremstyle{definition}

\def\-{\mbox{--}}

\newtheorem{pro}{Proposition}

\newtheorem{obs}{Observation}

\newtheorem{definition}{Definition}

\begin{document}
\title{Graphs with 3-rainbow index $n-1$ and $n-2$\Large\bf \footnote{Supported by NSFC No.11071130.}}
\author{\small Xueliang~Li, Kang Yang, Yan~Zhao\\
\small Center for Combinatorics and LPMC-TJKLC\\
\small Nankai University, Tianjin 300071, China\\
\small  lxl@nankai.edu.cn;
 yangkang@mail.nankai.edu.cn; zhaoyan2010@mail.nankai.edu.cn}
\date{}
\maketitle
\begin{abstract}

Let $G$ be a nontrivial connected graph with an edge-coloring $c:E(G)\rightarrow \{1,2,\ldots,q\},$
$q\in \mathbb{N}$, where adjacent edges may be colored the same. A tree $T$ in $G$ is a $rainbow~tree$ if no two edges of $T$ receive the same color. For a vertex set $S\subseteq V(G)$, the tree connecting $S$ in $G$ is called an $S$-tree. The minimum number of colors that are needed in an edge-coloring of $G$ such that there is a rainbow $S$-tree for each $k$-set $S$ of $V(G)$ is called the $k$-rainbow index of $G$, denoted by $rx_k(G)$. In \cite{Zhang}, they got that the $k$-rainbow index of a tree is $n-1$ and the $k$-rainbow index of a unicyclic graph is $n-1$ or $n-2$. So there is an intriguing problem: Characterize graphs with the $k$-rainbow index $n-1$ and $n-2$. In this paper, we focus on $k=3$, and characterize the graphs whose 3-rainbow index is $n-1$ and $n-2$, respectively.

{\flushleft\bf Keywords}: rainbow $S$-tree, $k$-rainbow index.

{\flushleft\bf AMS subject classification 2010}: 05C05, 05C15, 05C75.

\end{abstract}

\section{Introduction}
All graphs considered in this paper are simple, finite and
undirected. We follow the terminology and notation of Bondy and
Murty \cite{Bondy}. Let $G$ be a nontrivial connected graph with an
edge-coloring $c: E(G)\rightarrow \{1,2,\ldots,q\}$, $q\in
\mathbb{N}$, where adjacent edges may be colored the same. A path of
$G$ is a \emph{rainbow path} if every two edges of the path have
distinct colors. The graph $G$ is \emph{rainbow connected} if for
every two vertices $u$ and $v$ of $G$, there is a rainbow path
connecting $u$ and $v$. The minimum number of colors for which there
is an edge coloring of $G$ such that $G$ is rainbow connected is
called the \emph{rainbow connection number}, denoted by $rc(G)$.
Results on the rainbow connectivity can be found in
\cite{Chartrand1,Caro, Chartrand,ChartrandZhang,LSS, LiSun}.

These concepts were introduced by Chartrand et al. in \cite{Chartrand1}. In \cite{Zhang}, they generalized the concept of  rainbow path to rainbow tree.
A tree $T$ in $G$ is a $rainbow~tree$ if no two edges of $T$ receive the same color. For $S\subseteq V (G)$, a $rainbow\ S$-$tree$ is a rainbow tree connecting $S$. Given a fixed integer $k$  with $2\leq k \leq n$, the edge-coloring $c$ of $G$ is called a $k$-$rainbow~coloring$ of $G$ if for every $k$-subset $S$ of $V(G)$, there exists a rainbow $S$-tree. In this case,  $G$ is called $k$-$rainbow~connected$. The minimum number of colors that are needed in a $k$-$rainbow~coloring$ of $G$ is called the $k$-$rainbow~index$ of $G$, denoted by $rx_k(G)$. Clearly, when $k=2$, $rx_2(G)$ is nothing new but the rainbow connection number $rc(G)$ of $G$. For every connected graph $G$ of order $n$, it is easy to see that $rx_2(G)\leq rx_3(G)\leq \cdots \leq rx_n(G)$.

The $Steiner~distance$ $d_G(S)$ of a set $S$ of vertices in $G$ is the minimum size of a tree in $G$ connecting $S$. The $k$-$Steiner~diameter$ $sdiam_k(G)$ of $G$ is the maximum Steiner distance of $S$ among all sets $S$ with $k$ vertices in $G$. Then there is a simple upper bound and lower bound for $rx_k(G)$.

\begin{obs}[\cite{Zhang}]\label{obs1}
For every connected graph $G$ of order $n\geq 3$ and each integer $k$ with $3\leq k\leq n$,
$k-1\leq sdiam_k(G)\leq rx_k(G)\leq n-1$.
\end{obs}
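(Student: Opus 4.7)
The statement comprises three inequalities, each handled by a short independent argument, and the plan is to verify them in turn.

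First, for the lower bound $k-1 \leq sdiam_k(G)$, I would use the fact that any tree connecting a $k$-set of vertices has at least $k$ vertices and hence at least $k-1$ edges; therefore $d_G(S) \geq k-1$ for every $k$-set $S$, and taking the maximum over such $S$ yields the bound.

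Second, for the middle inequality $sdiam_k(G) \leq rx_k(G)$, I would fix an optimal $k$-rainbow coloring of $G$ using exactly $rx_k(G)$ colors. For every $k$-set $S \subseteq V(G)$, by definition there exists a rainbow $S$-tree $T_S$. Since $T_S$ connects $S$ it must have at least $d_G(S)$ edges, and since it is rainbow those edges carry pairwise distinct colors. Hence $rx_k(G) \geq |E(T_S)| \geq d_G(S)$, and taking the maximum over $S$ gives $rx_k(G) \geq sdiam_k(G)$.

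Third, for the upper bound $rx_k(G) \leq n-1$, I would exhibit an explicit coloring. Choose any spanning tree $T_0$ of $G$, assign the $n-1$ distinct colors $1,2,\ldots,n-1$ to its edges, and color the remaining edges of $G$ arbitrarily. For any $k$-set $S$, the minimal subtree of $T_0$ spanning $S$ is an $S$-tree whose edges all have distinct colors, so it is a rainbow $S$-tree; this shows that $n-1$ colors already form a $k$-rainbow coloring.

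No step presents a real obstacle: the bound is essentially a consequence of definitions together with the existence of a spanning tree. The one small subtlety, in the middle inequality, is the observation that any rainbow tree uses exactly as many distinct colors as it has edges, so that its edge count, and in particular $d_G(S)$, lower-bounds the total palette of any valid rainbow coloring.
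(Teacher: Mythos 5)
Your proof is correct and is exactly the standard argument: the paper states this observation without proof, importing it from the cited reference, and your three steps (a tree on a $k$-set has at least $k-1$ edges; a rainbow $S$-tree forces at least $d_G(S)$ distinct colors in any valid coloring; a rainbowly colored spanning tree gives the upper bound $n-1$) are precisely the expected justification. No gaps.
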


They showed that trees are composed of a class of graphs whose k-rainbow index attains
the upper bound.

\begin{pro}[\cite{Zhang}]\label{pro1}
Let $T$ be a tree of order $n\geq 3$. For each integer $k$ with $3\leq k\leq n$, $rx_k(T)=n-1$.
\end{pro}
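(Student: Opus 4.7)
The upper bound $rx_k(T) \le n-1$ is free from Observation \ref{obs1} (since any tree has only $n-1$ edges, coloring them all distinctly trivially works). The content is the matching lower bound, and the plan is to exploit the defining feature of a tree: for any subset $S \subseteq V(T)$ of at least two vertices there is a \emph{unique} minimal subtree spanning $S$, and this subtree is forced to serve as the $S$-tree in any $k$-rainbow coloring. Hence a $k$-rainbow coloring of $T$ must assign pairwise distinct colors to every two edges $e_1, e_2$ that simultaneously lie in the Steiner tree of some $k$-set $S$.

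The heart of the argument is therefore the following combinatorial fact: for any two distinct edges $e_1,e_2 \in E(T)$ and any $k$ with $3 \le k \le n$, there exists a $k$-set $S\subseteq V(T)$ whose unique Steiner tree in $T$ contains both $e_1$ and $e_2$. I would prove this by choosing two specially placed endpoints. Write $e_1=u_1v_1$ and $e_2=u_2v_2$ and split into two cases. If $e_1$ and $e_2$ share a vertex $v_1=v_2=v$, then the unique $u_1$-$u_2$ path in $T$ passes through $v$ and uses both edges. If $e_1$ and $e_2$ are disjoint, label the endpoints so that $v_1$ lies on the path from $u_1$ to $e_2$ and $v_2$ lies on the path from $u_2$ to $e_1$; then the unique $u_1$-$u_2$ path in $T$ again traverses both $e_1$ and $e_2$. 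In either case, complete $\{u_1,u_2\}$ to a $k$-set $S$ by adjoining any $k-2$ further vertices of $T$ (possible because $n\ge k$); the Steiner tree of $S$ is obtained by attaching branches to the $u_1$-$u_2$ path, so it still contains both $e_1$ and $e_2$.

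Applying this to every pair of edges forces a $k$-rainbow coloring to use $n-1$ distinct colors on the $n-1$ edges of $T$, giving $rx_k(T)\ge n-1$. Combined with the upper bound, equality follows.

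There is no real obstacle here; the only delicate point is the case split on whether the two edges share a vertex, and the small check that enough vertices exist to inflate $\{u_1,u_2\}$ to a $k$-set, which is immediate from the hypothesis $k\le n$.
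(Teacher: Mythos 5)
Your proposal is correct. Note that the paper does not prove Proposition~\ref{pro1} at all --- it is quoted from \cite{Zhang} as a known result --- so there is no in-paper proof to match against. Your argument is essentially the standard one, and within this paper's own toolkit it amounts to a direct proof of Observation~\ref{obs2} specialized to trees: every edge of a tree is a bridge, and your pair-of-edges claim (for any two edges there is a $k$-set whose Steiner tree uses both) is exactly why any two bridges must receive distinct colors; citing Observation~\ref{obs2} would shorten the lower bound to one line. One small point of phrasing: the unique minimal Steiner tree of $S$ is not literally ``forced to serve as the $S$-tree'' --- a rainbow $S$-tree could a priori be a larger subtree --- but since in a tree every subtree connecting $S$ contains the minimal one, and a subtree of a rainbow tree is rainbow, the minimal Steiner tree must itself be rainbow, which is all your argument needs. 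The case split (shared endpoint versus disjoint edges, orienting the endpoints toward each other) and the padding of $\{u_1,u_2\}$ to a $k$-set are both handled correctly.
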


They also showed that the $k$-rainbow index of a unicyclic graph is $n-1$ or $n-2$.

\begin{thm}[\cite{Zhang}]\label{thm1}
If $G$ is a unicyclic graph of order $n\geq 3$ and girth $g\geq 3$, then
\begin{equation}
 rx_k(G)=
   \begin{cases}
      n-2, & \text{$k=3$ and $g\geq4$}; \\
     n-1, & \text{$g=3$ or $4\leq k\leq n$}.
    \end{cases}
\end{equation}
\end{thm}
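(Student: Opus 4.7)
My plan is to establish the two cases separately, each via matching upper and lower bounds.

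Upper bound. The inequality $rx_k(G)\le n-1$ is immediate from Observation~\ref{obs1}. For the improvement $rx_3(G)\le n-2$ when $k=3,\,g\ge 4$, I give an explicit coloring. Let $C=v_1v_2\cdots v_gv_1$ be the unique cycle with edges $e_1,\dots,e_g$. Color the $n-g$ off-cycle edges with pairwise distinct ``private'' colors, and color $C$ by the pattern $c(e_i)=i$ for $1\le i\le g-2$ with $c(e_{g-1})=1$ and $c(e_g)=2$, using $g-2$ further colors, $n-2$ in total. To verify this is $3$-rainbow, fix any $3$-set $S$ and project to $S'\subseteq V(C)$ via attachment points. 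Because off-cycle edges carry private colors, the global Steiner tree is rainbow iff its cycle-part is. A short case-analysis on the location of $S'$ with respect to the duplicated pairs $\{e_1,e_{g-1}\}$ and $\{e_2,e_g\}$ shows that among the three arc-deletion Steiner trees for $S'$, at least one avoids every color collision.

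Lower bound for $k=3,\,g\ge 4$. I must show $rx_3(G)\ge n-2$. The Steiner diameter bound is insufficient because $sdiam_3(C_g)=g-\lceil g/3\rceil$ can be strictly less than $g-2$. I argue by contradiction: suppose a $3$-rainbow coloring uses only $n-3$ colors. First I establish \emph{bridge-distinctness}: if two bridges $f_1,f_2$ share a color, then $G-\{f_1,f_2\}$ has three components, and any $3$-set containing one vertex from each component has every Steiner tree containing both $f_1$ and $f_2$, a contradiction. Hence the $n-g$ non-cycle edges receive $n-g$ distinct colors. Next, I observe that for any $3$-subset $S'$ of $V(C)$, rainbow Steiner trees in $G$ can be pruned to lie entirely in $C$ (off-cycle branches only add edges and therefore only add rainbow constraints), so the restriction of the coloring to $C$ is itself $3$-rainbow; a pigeonhole analysis on arc-positions in $C_g$ then shows that this forces at least $g-2$ colors on $C$. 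Careful accounting of possible color overlap between cycle and bridge palettes pushes the total to at least $n-2$.

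Lower bound for $g=3$ or $k\ge 4$. Here $rx_k(G)\ge n-1$. When $g=3$, bridge-distinctness forces the $n-3$ non-cycle edges to receive distinct colors; from the $3$-set $\{v_1,v_2,v_3\}$, whose only Steiner trees are the three $2$-edge subgraphs of the triangle, at least $2$ of the triangle edges must have distinct colors, yielding a total of at least $n-1$ colors. When $g\ge 4$ and $k\ge 4$, I choose a $k$-set $S$ that contains all leaves of $G$ together with enough additional vertices so that every Steiner tree for $S$ in $G$ is forced to be a spanning tree of $G$ (i.e., $T=G-e$ up to swapping the deleted cycle edge for another); any rainbow spanning tree requires $n-1$ distinct colors on its $n-1$ edges.

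The main obstacle is the lower bound in the exceptional case $k=3,\,g\ge 4$: ruling out color-reuse between bridges and cycle edges requires a careful structural analysis of which cycle colorings permit the ``avoid $e_i$'' choice among their three candidate arc-deletion Steiner trees, and proving the intermediate claim $rx_3(C_g)\ge g-2$ needs a separate pigeonhole argument since the Steiner diameter can fall short of this bound. Tracking these interactions cleanly across all $3$-subsets is the combinatorial heart of the proof.
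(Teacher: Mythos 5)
The paper does not actually prove this statement: Theorem~\ref{thm1} is imported verbatim from \cite{Zhang}, so there is no in-paper argument to compare against and your proposal must be judged on its own. Its skeleton is reasonable (explicit $(n-2)$-coloring for the upper bound; bridge-distinctness plus the arc-deletion structure of Steiner trees in a cycle for the lower bounds), but the lower bounds contain genuine gaps. Most seriously, the strategy for $4\le k\le n$ rests on a false premise: already for $G=C_n$ with $4\le k<n$ you cannot choose a $k$-set $S$ so that \emph{every} Steiner tree for $S$ is spanning, since some vertex is unchosen, it lies in the interior of some gap between cyclically consecutive elements of $S$, and deleting that gap yields a non-spanning minimal $S$-tree. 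The correct argument must let $S$ depend on the coloring: with only $n-2$ colors on the $n$ cycle edges, either one color occurs three times or two colors each occur twice; taking $S$ to contain the endpoints of two equally colored edges $e_1,e_2$ (padded to size $k$), every minimal $S$-tree either contains both $e_1$ and $e_2$ or equals the spanning tree $C_n-e_i$, which then contains the other repeated pair (or the third occurrence). Choosing $S$ from the leaves of $G$ alone cannot work.

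The remaining two gaps are both instances of uncontrolled overlap between the bridge palette and the cycle palette. For $g=3$ you conclude ``$(n-3)$ bridge colors plus $2$ triangle colors gives $n-1$,'' but nothing rules out the two triangle colors being reused from the bridges; one must, for instance, take $S=\{u,v_2,v_3\}$ with $u$ a pendant neighbour of $v_1$ and note that the two triangle edges of any rainbow $S$-tree must also avoid the color of the bridge $uv_1$, and then propagate this over all bridges. For $k=3$, $g\ge4$ you explicitly defer the analogous accounting (``careful accounting \ldots pushes the total to at least $n-2$''); this is exactly the $|A_1\cap A_2|$ analysis that occupies most of the proofs of Lemmas~\ref{lem8} and \ref{lem9} in this paper for the bicyclic and tricyclic analogues, so it is the substance of the proof, not bookkeeping. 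The upper-bound colorings are fine (the pattern $1,2,\dots,g-2,1,2$ on the cycle is the standard one achieving $rx_3(C_g)=g-2$), but as written the proposal establishes none of the three lower bounds.
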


A natural thought is that which graph of order $n$ has the $k$-rainbow index $n-1$ except for a tree and a unicyclic graph of girth 3? Furthermore, which graph of order $n$ has the $k$-rainbow index $n-2$ except for a unicyclic graph of girth at least 4? In this paper, we focus on $k=3$. In addition, some known results are mentioned.

\begin{obs}[\cite{Zhang}]\label{obs2}
Let $G$ be a connected graph of order $n$ containing two bridges $e$ and $f$. For each integer $k$ with $2\leq k\leq n$, every $k$-rainbow coloring of $G$ must assign distinct colors to $e$ and $f$.
\end{obs}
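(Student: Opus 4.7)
The plan is to produce, given two distinct bridges $e$ and $f$, a $k$-subset $S\subseteq V(G)$ such that every $S$-tree is forced to contain both $e$ and $f$. Once such an $S$ exists, any rainbow $S$-tree under a $k$-rainbow coloring must give $e$ and $f$ different colors, which is precisely the conclusion.

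Write $e=uv$ and $f=xy$. Removing $e$ partitions $V(G)$ into the two components $V_u\ni u$ and $V_v\ni v$ of $G-e$; similarly $G-f$ has components $V_x\ni x$ and $V_y\ni y$. The first structural fact I would use is that both endpoints of $f$ lie on one side of the $e$-cut: otherwise $f$ would join $V_u$ to $V_v$ in $G-e$, contradicting the assumption that $e$ is a bridge. Relabeling if necessary, assume $x,y\in V_u$. Then $e=uv$ survives in $G-f$, and $V_v$ is untouched by the removal of $f$, so $u$ together with every vertex of $V_v$ lies in the same component of $G-f$. After possibly swapping the labels $x$ and $y$, I may assume this common component is $V_x$, so that $V_v\cup\{u\}\subseteq V_x$ and hence $V_y\subseteq V_u\setminus\{u\}$; in particular $y\in V_u$ while $y\notin V_x$.

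Now take $S$ to consist of $v$, $y$, and any further $k-2$ vertices of $G$ if $k\ge 3$. Since $v\in V_v$ and $y\in V_u$ lie in different components of $G-e$, every $S$-tree must traverse the unique edge crossing this cut, namely $e$. Since $v\in V_x$ and $y\in V_y$ lie in different components of $G-f$, every $S$-tree must likewise contain $f$. Hence both bridges appear in every $S$-tree, and if such a tree is rainbow then $c(e)\ne c(f)$.

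The only mild obstacle is keeping the two WLOG relabelings straight so that the concrete witnesses $v$ and $y$ land in the correct corners of the joint partition induced by $\{V_u,V_v\}$ and $\{V_x,V_y\}$. Once those relabelings are in place, the conclusion reduces to a one-line cut-crossing argument for each bridge.
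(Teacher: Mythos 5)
Your argument is correct and complete: the two cut decompositions are handled carefully, the case analysis via the two relabelings is sound, and the set $S=\{v,y,\ldots\}$ indeed forces every $S$-tree to cross both bridge cuts, so any rainbow $S$-tree guaranteed by the $k$-rainbow coloring yields $c(e)\neq c(f)$. The paper states this observation as a cited result from \cite{Zhang} without reproducing a proof, and your cut-crossing argument is exactly the standard one intended for it.
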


\begin{lem}[\cite{CLYZ}]\label{lem3}
If $G$ is a connected graph and $\{H_1, H_2,\cdots, H_k\}$ is a partition of $V(G)$ into connected subgraphs, then $rx_3(G)\leq k-1+\sum_{i=1}^krx_3(H_i)$.
\end{lem}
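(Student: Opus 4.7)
The strategy is to build an explicit $3$-rainbow coloring of $G$ that uses exactly $(k-1)+\sum_{i=1}^{k}rx_3(H_i)$ colors. Choose pairwise disjoint color palettes $C_1,\ldots,C_k$ with $|C_i|=rx_3(H_i)$ and, for each $i$, fix an optimal $3$-rainbow coloring of $H_i$ using only colors from $C_i$. Contract each $H_i$ to a super-vertex $v_i$ to obtain a connected multigraph on $k$ vertices; pick a spanning tree $T^*$ of this contraction and lift each of its $k-1$ edges to a concrete bridging edge of $G$. Color these $k-1$ bridging edges with $k-1$ fresh colors disjoint from every $C_i$; any remaining inter-part edges may receive an already-used color.

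To verify $3$-rainbow connectivity, fix $S=\{u,v,w\}\subseteq V(G)$, let $V_S\subseteq\{v_1,\ldots,v_k\}$ be the set of super-vertices whose parts meet $S$ (so $1\le|V_S|\le 3$), and let $T'$ be the minimal subtree of $T^*$ spanning $V_S$. For each $v_j\in V(T')$ set
\[
A_j=(S\cap V(H_j))\cup\{\text{endpoints in }H_j\text{ of the edges of }T'\text{ incident to }v_j\}.
\]
A short case check on the shape of $T'$ (a single node, a path with endpoints in $V_S$, or a $Y$-tree with one Steiner super-vertex) shows that $|A_j|\le 3$ for every $j$: whenever $v_j$ has degree $3$ in $T'$ it must be a Steiner super-vertex with $H_j\cap S=\emptyset$, and whenever $v_j$ has degree $2$ in $T'$ at most one vertex of $S$ can lie in $H_j$.

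Since $H_j$ is $3$-rainbow colored with palette $C_j$, there is a rainbow tree $T_j\subseteq H_j$ containing $A_j$: for $|A_j|=3$ apply the $3$-rainbow property directly, and for $|A_j|\le 2$ extend $A_j$ to any $3$-subset of $V(H_j)$ and take the resulting rainbow tree, which still contains $A_j$ (the tiny cases $|V(H_j)|\le 2$ are handled by the trivial tree or single edge of $H_j$). Form $T=\bigl(\bigcup_{v_j\in V(T')}T_j\bigr)\cup E_{T'}$, where $E_{T'}$ consists of the chosen bridging edges indexed by $T'$. Because the $T_j$'s live in pairwise disjoint vertex sets and are glued together through the tree $T'$ at exactly the prescribed endpoints, $T$ is a tree containing $S$; every edge of $T$ is colored either inside some $C_j$ or by one of the $k-1$ new bridging colors, and all these palettes are mutually disjoint, so $T$ is rainbow.

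The main obstacle is the case analysis that forces $|A_j|\le 3$: it is what allows the $3$-rainbow hypothesis on each $H_j$ to be invoked for the locally required vertex set. Once that combinatorial fact is secured, palette-disjointness makes the global tree automatically rainbow, and the bookkeeping $\sum_{i=1}^k|C_i|+(k-1)=(k-1)+\sum_{i=1}^k rx_3(H_i)$ delivers the claimed bound.
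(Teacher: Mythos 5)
Your proof is correct. Note that the paper does not prove this lemma at all --- it is quoted from \cite{CLYZ} as a known result --- so there is no in-paper argument to compare against; your construction (disjoint palettes on the parts, $k-1$ fresh colors on a lifted spanning tree of the quotient, and the degree count showing each part only ever needs to connect an attachment set $A_j$ of size at most $3$) is the standard proof one would expect, and the one genuinely delicate point, namely that $|A_j|\le 3$ because a Steiner tree on at most three terminals in a tree has maximum degree $3$ only at a non-terminal branch vertex, is handled correctly.
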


\begin{thm}[\cite{CLYZ}]\label{thm3}
Let $G$ be a connected graph of order $n$. Then $rx_3(G)=2$ if and only if $G=K_5$ or $G$ is a 2-connected graph of order 4 or $G$ is of order 3.
\end{thm}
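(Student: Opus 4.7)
\noindent\emph{Proof plan.} My plan has three parts: a structural reduction extracted from the two-color constraint, a sufficiency check on the three listed classes, and a forcing argument for necessity when $n\ge 5$.

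First, I would extract the key necessary condition. A rainbow tree that uses only two colors has at most two edges, and a two-edge tree is a $P_3$ on exactly three vertices. So $rx_3(G)=2$ forces a rainbow $S$-tree supported on the vertex set $S$ itself for every $3$-subset $S$, and hence $G[S]$ must contain at least two edges. Equivalently, $\overline{G}$ has no path of length two; i.e., $\overline{G}$ is a (not necessarily perfect) matching. Combined with Observation \ref{obs1}, this shows that $G$ must be obtained from $K_n$ by deleting an independent set of edges, and reduces the characterization to checking which such graphs actually admit a valid $2$-coloring.

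For sufficiency, I would exhibit explicit $2$-colorings on each listed class. When $n=3$, any connected $G$ has at most two edges, which I color distinctly. On $C_4$ I alternate colors around the cycle. On $K_4-e$ and $K_4$ I take one color class to be a perfect matching and the other its complement, checking that every triangle receives both colors. On $K_5$ I use the Ramsey-extremal coloring decomposing $E(K_5)$ into two edge-disjoint $5$-cycles; since $R(3,3)=6$, no triangle is monochromatic, and since every $3$-subset of $V(K_5)$ spans a triangle, each admits a rainbow $2$-edge subtree.

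The heart of the argument is necessity for $n\ge 5$. If $G=K_n$ with $n\ge 6$, then $R(3,3)=6$ gives, in every $2$-coloring, a monochromatic triangle $T$; the $3$-set $V(T)$ admits no rainbow Steiner tree, since any $2$-edge tree on three of its vertices uses two edges of $T$. Otherwise $G$ has a non-edge $uv$, and every $w\in V\setminus\{u,v\}$ is adjacent to both $u$ and $v$ and forces $c(uw)\neq c(vw)$ via the $3$-set $\{u,v,w\}$. Since $n\ge 5$, there are at least three such $w$, so pigeonhole yields $w,w'$ with $c(uw)=c(uw')=a$. If $ww'\notin E(G)$, then $G[\{u,w,w'\}]$ has only the edges $uw$ and $uw'$, so the unique $2$-edge Steiner tree on this set is monochromatic, a contradiction. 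If $ww'\in E(G)$, the triangle on $\{u,w,w'\}$ forces $c(ww')\neq a$, while $c(vw)=c(vw')\neq a$, so the triangle on $\{v,w,w'\}$ is monochromatic and again admits no rainbow $2$-edge tree. The main obstacle I anticipate is this final forcing step: one must handle both the matched and adjacent sub-cases for $\{w,w'\}$ uniformly, but the dichotomy above closes both at once.
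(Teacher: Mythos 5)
The paper does not actually prove this theorem: it is imported verbatim from reference \cite{CLYZ} (Theorem \ref{thm3} is stated with a citation and no proof), so there is no in-paper argument to compare yours against. Judged on its own terms, your proof is correct and self-contained. The structural reduction is the right key observation: with only two colors a rainbow tree has at most two edges, hence every $3$-set $S$ must induce a $P_3$ in $G$, which is equivalent to $\overline{G}$ being a matching; this immediately pins down the candidates as $K_n$ minus a matching. Your sufficiency colorings ($C_4$ alternating, $K_4$ and $K_4-e$ via a perfect matching versus its complement, $K_5$ via the two edge-disjoint $5$-cycles) all check out, and the necessity argument for $n\geq 5$ is airtight: Ramsey's $R(3,3)=6$ kills $K_n$ for $n\geq 6$, and the pigeonhole/forcing dichotomy on $w,w'$ (whether or not $ww'$ is an edge) cleanly eliminates every graph with a non-edge. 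Two small points to repair before this is polished: (i) the claim that a connected graph of order $3$ ``has at most two edges'' is false for $K_3$; you should instead color a spanning path of $G$ with two distinct colors and the remaining edge (if any) arbitrarily; (ii) for $n=4$ you should state explicitly that the graphs of order $4$ whose complement is a matching are exactly $C_4$, $K_4-e$ and $K_4$, i.e.\ exactly the $2$-connected graphs of order $4$, so that the necessity direction lands precisely on the class named in the statement. Neither point affects the substance of the argument.
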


\begin{obs}[\cite{CLYZ}]\label{obs3}
Let $G$ be a connected graph of order $n$, and $H$ be a connected spanning subgraph of $G$. Then $rx_3(G)\leq rx_3(H)$.
\end{obs}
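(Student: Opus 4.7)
The plan is to prove the inequality by taking an optimal $3$-rainbow coloring of the spanning subgraph $H$ and extending it to a $3$-rainbow coloring of $G$ without introducing any new colors. Since $H$ is a \emph{spanning} subgraph, we have $V(H)=V(G)$, so every $3$-subset $S$ of $V(G)$ is also a $3$-subset of $V(H)$; this is the crucial structural fact that makes the extension work.

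Concretely, I would first fix an optimal edge-coloring $c_H:E(H)\to\{1,2,\ldots,rx_3(H)\}$ witnessing the value $rx_3(H)$, so that for every $3$-subset $S\subseteq V(H)$ there exists a rainbow $S$-tree in $H$. Next, I would define an edge-coloring $c:E(G)\to\{1,2,\ldots,rx_3(H)\}$ by setting $c(e)=c_H(e)$ for $e\in E(H)$ and $c(e)=1$ (or any fixed color already in use) for each edge $e\in E(G)\setminus E(H)$. Finally, for any $3$-subset $S\subseteq V(G)$, I would invoke the rainbow $S$-tree $T_S$ guaranteed in $H$; since $E(T_S)\subseteq E(H)\subseteq E(G)$ and $c$ agrees with $c_H$ on $E(H)$, the tree $T_S$ remains a rainbow $S$-tree in $G$ under $c$. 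Hence $c$ is a valid $3$-rainbow coloring of $G$ using at most $rx_3(H)$ colors, yielding $rx_3(G)\le rx_3(H)$.

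There is essentially no main obstacle here; the statement is a direct monotonicity property, and the only point worth emphasizing is the use of the word \emph{spanning}, which ensures $V(H)=V(G)$ so that the $3$-subsets on which the coloring must be verified are identical in the two graphs. If $H$ were merely a connected subgraph without spanning the whole vertex set, the argument would fail because a $3$-set involving a vertex outside $V(H)$ would have no candidate tree inherited from $H$.
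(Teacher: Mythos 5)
Your proof is correct and is exactly the standard monotonicity argument: extend an optimal coloring of $H$ to $G$ by reusing an existing color on the edges of $E(G)\setminus E(H)$, and note that every $3$-subset of $V(G)$ already has a rainbow tree inside $H$ because $H$ is spanning. The paper only cites this observation from \cite{CLYZ} without reproducing a proof, but your argument is the intended one and is complete.
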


This paper is organized as follows. In section 2, some basic results and notations are presented. In section 3, we characterize the graphs whose 3-rainbow index is $n-1$ and $n-2$, respectively. And we take two steps for the latter case. First, we consider the bicyclic graphs. Second, we consider the tricyclic graphs. Finally, we characterize the graphs whose 3-rainbow index is $n-2$.

\section{Some basic results}
First of all, we need some more terminology and notations.

\begin{definition}
Let $G$ be a connected graph with $n$ vertices and $m$ edges. Define the $cyclomatic~number$ of $G$ as $c(G)=m-n+1$. A graph $G$ with $c(G)=k$ is called a $k$-$cyclic$ graph. According to this definition, if a graph $G$ meets $c(G)=0$, 1, 2 or 3, then the graph $G$ is called acyclic(or a tree), unicyclic, bicyclic, or tricyclic, respectively.
\end{definition}

\begin{definition}
For a subgraph $H$ of $G$ and $v\in V(G)$, let $d(v,H)=min\{d_G(v,x): x\in V(H)\}$.
\end{definition}

Next we define some new notations.

\begin{definition}
For a connected graph $G$ of order $n$,  set $V(G)=\{v_1,v_2,\cdots,v_n\}$, we define a $lexicographic$ $ordering$ between any two edges of $G$ by $v_iv_j<v_sv_t$ if and only if $i<s$ or $i=s$, $j<t$.
\end{definition}

Note that, the lexicographic ordering of a connected graph is unique. Given a coloring $c$ of a connected graph $G$, denote by $c_{\ell}(G)$ a sequence of colors of the edges which are ordered by the lexicographic ordering.

Let $G$ be a connected graph, to $contract$ an edge $e=xy$ is to delete $e$ and replace its ends by a single vertex incident to all the edges which were incident to either $x$ or $y$. Let $G^{'}$ be the graph obtained by contracting some edges of $G$. Given a rainbow coloring of $G^{'}$, when it comes back to $G$, we keep the colors of corresponding edges of $G^{'}$ in $G$ and assign  a new color to a new edge, which makes $G$ 3-rainbow connected. Hence, the following lemma holds.

\begin{lem}\label{lem1}
Let $G$ be a connected graph, and $G^{'}$ be a connected graph by contracting some edges of $G$. Then $rx_3(G)\leq rx_3(G^{'})+|G|-|G^{'}|$.
\end{lem}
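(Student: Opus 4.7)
The plan is to explicitly lift an optimal 3-rainbow coloring of $G'$ back to $G$, making precise the informal recipe stated just before the lemma. Let $F\subseteq E(G)$ be the set of contracted edges; I may assume without loss of generality that $F$ is a forest in $G$, since any extra edges inside a cycle of $F$ would collapse to loops and so $G'$ is unchanged if one contracts only a spanning forest of the originally chosen set. With this convention $|F|=|G|-|G'|$. Fix an optimal 3-rainbow coloring $c'$ of $G'$ using the palette $\{1,\ldots,rx_3(G')\}$ and define a coloring $c$ of $G$ as follows: each edge of $F$ receives its own fresh color from the disjoint palette $\{rx_3(G')+1,\ldots,rx_3(G')+|F|\}$, and each edge $e\in E(G)\setminus F$ inherits the color of its image under the contraction map $\pi$ (if several edges of $E(G)\setminus F$ are identified to the same edge of $G'$, they share the common color, which is allowed since adjacent edges may repeat colors). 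In total $c$ uses exactly $rx_3(G')+|G|-|G'|$ colors.

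It remains to verify that $c$ is a 3-rainbow coloring. Take an arbitrary 3-set $S=\{x,y,z\}\subseteq V(G)$ and write $\pi(S)\subseteq V(G')$ for its image. In the generic case $|\pi(S)|=3$, choose a rainbow $\pi(S)$-tree $T'$ in $G'$ and lift each edge of $T'$ to a fixed preimage in $E(G)\setminus F$; the result is a subgraph of $G$ that is rainbow in the old palette and meets each blob $\pi^{-1}(v)$ with $v\in\pi(S)$. Inside every such blob (which is a subtree of $F$) graft the unique subtree joining the corresponding element of $S$ to the lifted endpoints already present in that blob; these grafted edges all lie in $F$, so they carry pairwise distinct colors from the new palette, which is disjoint from the old. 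After pruning redundancies one obtains a rainbow $S$-tree. The degenerate cases $|\pi(S)|\in\{1,2\}$ are dealt with by the same template: for $|\pi(S)|=2$ one uses a rainbow $G'$-path between the two images, which exists because $rx_2(G')\leq rx_3(G')$; for $|\pi(S)|=1$ one uses blob edges alone.

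The only real subtlety is the bookkeeping: ensuring that the lifted backbone together with the grafted blob subtrees forms a genuine tree, and that parallel preimages in $G$ do not cause color conflicts. Both points are neutralized by the two conventions above (one designated preimage per edge of $G'$, and pruning). Since the old and new palettes are disjoint, the rainbow property follows without any combinatorial ingenuity beyond case analysis on $|\pi(S)|$, so I expect no genuine obstacle — the content of the lemma lies in spelling out the $|\pi(S)|<3$ cases that the authors' single sentence of motivation glosses over.
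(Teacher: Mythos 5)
Your proposal is correct and follows the same route as the paper, which disposes of this lemma in one sentence before its statement (keep the colors of the surviving edges, give each contracted edge a fresh color); your contribution is just to spell out the lifting/grafting and the $|\pi(S)|<3$ cases. One tiny imprecision: the existence of a rainbow $\pi(x)$--$\pi(z)$ path under the \emph{given} coloring $c'$ does not formally follow from the inequality $rx_2(G')\leq rx_3(G')$, but rather from the observation that a rainbow $\{\pi(x),\pi(z),w\}$-tree contains such a path (or from $|G'|\le 2$ being trivial), which is the same standard argument.
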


\begin{definition}
Let $G_0$ be the graph obtained by contracting all the cut edges of $G$,  then
$G_0$ is called the $basic$ $graph$ of $G$.
\end{definition}


\section{Main results}

\subsection{Characterize the graphs with $rx_3(G)\bf{=n-1}$}
\begin{thm}\label{thm4}
Let $G$ be a connected graph of order $n$. Then $rx_3(G)=n-1$ if and only if $G$ is a tree or $G$ is a unicyclic graph with girth 3.
\end{thm}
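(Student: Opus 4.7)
The plan is to prove the two directions separately. The ``if'' direction follows immediately: Proposition~\ref{pro1} gives $rx_3(T)=n-1$ for every tree $T$, and Theorem~\ref{thm1} gives $rx_3(G)=n-1$ for every unicyclic graph of girth $3$.

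For the ``only if'' direction, I would assume $G$ is neither a tree nor a unicyclic graph of girth $3$ and show $rx_3(G)\le n-2$. If $c(G)=1$, then $G$ is unicyclic of girth $\ge 4$ and Theorem~\ref{thm1} gives $rx_3(G)=n-2$ directly; otherwise $c(G)\ge 2$ and I would induct on $n$. The base $n=4$ is immediate: the only connected graphs with $c\ge 2$ on four vertices are $K_4-e$ and $K_4$, both of which have $rx_3=2=n-2$ by Theorem~\ref{thm3}. For the inductive step I split into three subcases. \emph{(a)} If $G$ has a bridge $e$, contract it: the cyclomatic number is unchanged, so $c(G/e)\ge 2$, and the inductive hypothesis gives $rx_3(G/e)\le n-3$, whence Lemma~\ref{lem1} yields $rx_3(G)\le n-2$. \emph{(b)} If $G$ is bridgeless and contains a cycle $C$ of length $\ge 4$, then for any $e\in E(C)$ I would extend the path $C-e$ to a spanning tree $T$ of $G$ and set $H=T+e$; this is a spanning unicyclic subgraph whose unique cycle is $C$ of length $\ge 4$, so Observation~\ref{obs3} combined with Theorem~\ref{thm1} gives $rx_3(G)\le rx_3(H)=n-2$.

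The remaining subcase \emph{(c)}---$G$ bridgeless with every cycle a triangle---is the main obstacle. Since two triangles sharing an edge would create a $4$-cycle, and any $2$-connected graph on four or more vertices necessarily contains a cycle of length $\ge 4$, every block of $G$ is a triangle, so $G$ is a $2$-edge-connected cactus of $k\ge 2$ triangle blocks glued at cut vertices; in particular $n=2k+1$. I would induct on $k$. The base case $k=2$ is the bowtie on $n=5$ vertices, where the explicit coloring $va=1,\ vb=2,\ ab=3,\ vc=1,\ vd=3,\ cd=2$ (with $v$ the shared vertex and triangles $vab,\,vcd$) can be verified to be $3$-rainbow on all ten triples, giving $rx_3\le 3=n-2$. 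For $k\ge 3$, I would pick a leaf triangle $T=\{u,a,b\}$ of the block tree (so $u$ is the only vertex of $T$ lying in another block), apply the outer inductive hypothesis to $G'=G-\{a,b\}$ (still a bridgeless triangle cactus with $c(G')=k-1\ge 2$) to obtain a $3$-rainbow coloring $c'$ of $G'$ using at most $n-4$ colors, and extend $c'$ by two fresh colors $\alpha,\beta$, setting $ua=\alpha$, $ub=\beta$, and $ab=\beta$. The bulk of the work is then to check that every $3$-subset of $V(G)$ meeting $\{a,b\}$ admits a rainbow $S$-tree in the extended coloring; the key observation is that any rainbow $u$--$x$ path in $G'$ uses only the old colors, so prepending $ua$ (respectively $ub$) in the fresh colour $\alpha$ (resp.\ $\beta$) preserves the rainbow property. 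This local extension verification is where most of the combinatorial work sits.
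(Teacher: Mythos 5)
Your proof is correct, but it takes a genuinely different route from the paper's. The paper's ``only if'' direction is a short, non-inductive application of Lemma~\ref{lem3}: if $G$ contains a cycle of length $r\ge 4$, that cycle is taken as one part of a partition into connected subgraphs and coloured with $r-2$ colours; otherwise all cycles are triangles and there are at least two of them, and the paper partitions around two triangles (sharing a vertex or disjoint) coloured with the same three colours, reaching $rx_3(G)\le n-2$ in a few lines. You avoid Lemma~\ref{lem3} entirely and instead run an induction on $n$ built on Lemma~\ref{lem1} (bridge contraction preserves the cyclomatic number), Observation~\ref{obs3} combined with Theorem~\ref{thm1} (a spanning unicyclic subgraph through a cycle of length $\ge 4$), Theorem~\ref{thm3} for the base case, and an explicit treatment of triangle cacti with the bowtie as inner base case. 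Your route is longer and has to carry the block-structure analysis plus the leaf-triangle extension check, but that check does go through: attaching $ua$ and $ub$ in fresh colours to a rainbow $\{u,x,y\}$-tree or a rainbow $u$--$y$ path of $G'$ handles every $3$-set meeting $\{a,b\}$, and the ten triples of the bowtie verify as you claim. What your version buys is that it uses the quoted lemmas only as literally stated, whereas the paper's disjoint-triangle case silently reuses the three colours of $H_1$ on $H_2$ --- the literal bound $n-5+rx_3(H_1)+rx_3(H_2)$ from Lemma~\ref{lem3} is $n+1$, not $n-2$, so the paper is implicitly invoking a colour-sharing refinement that it does not state. Your subcase with a spanning unicyclic subgraph whose unique cycle has length $\ge 4$ is arguably the cleanest step of either argument and works whether or not $G$ has bridges.
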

\begin{proof}
If $G$ is a tree or a unicyclic graph with girth 3, by Proposition \ref{pro1} and Theorem \ref{thm1}, $rx_3(G)=n-1$. Conversely, suppose $G$ is a graph with $rx_3(G)=n-1$ but not a tree, then $G$ must contain cycles. Let $\{H_1, H_2,\cdots, H_k\}$ be a partition of $V(G)$ into connected subgraphs. If $G$ contains a cycle of length $r$ at least 4, then let $H_1$ be the $r$-cycle, and each other subgraph a single vertex. We color $H_1$ with $r-2$ dedicated colors, then by Lemma \ref{lem3}, $rx_3(G)\leq n-r+rx_3(H_1)=n-2$. Suppose then $G$ contains at least two triangles $C_1$ and $C_2$. If $C_1$ and $C_2$ have a vertex in common, then let $H_1$ be the union of $C_1$ and $C_2$, and each other subgraph a single vertex. We color both $C_1$ and $C_2$ with the same three dedicated colors, thus $rx_3(G)\leq n-5+rx_3(H_1)=n-2$. If $C_1$ and $C_2$ are vertex disjoint, then let $H_1=C_1$, $H_2=C_2$, and each other subgraph a single vertex. We color $H_1$ with three new colors and $H_2$ with the same three colors of $H_1$, thus $rx_3(G)\leq n-5+rx_3(H_1)+rx_3(H_2)=n-2$. Combining the above two cases, $G$ is a unicyclic graph with girth 3. Therefore, the result holds.
\end{proof}

\subsection{Characterize the graphs with $rx_3(G)\bf{=n-2}$}
Next, we characterize the graphs whose 3-rainbow index is $n-2$. We begin with a useful theorem from \cite{CLYZ}.

A 3-$sun$ is a graph constructed from a cycle $C_6=v_1v_2\cdots v_6v_1$ by adding three edges $v_2v_4$, $v_2v_6$ and $v_4v_6$.

\begin{figure}[h,t,b,p]
\begin{center}
\scalebox{1}[1]{\includegraphics{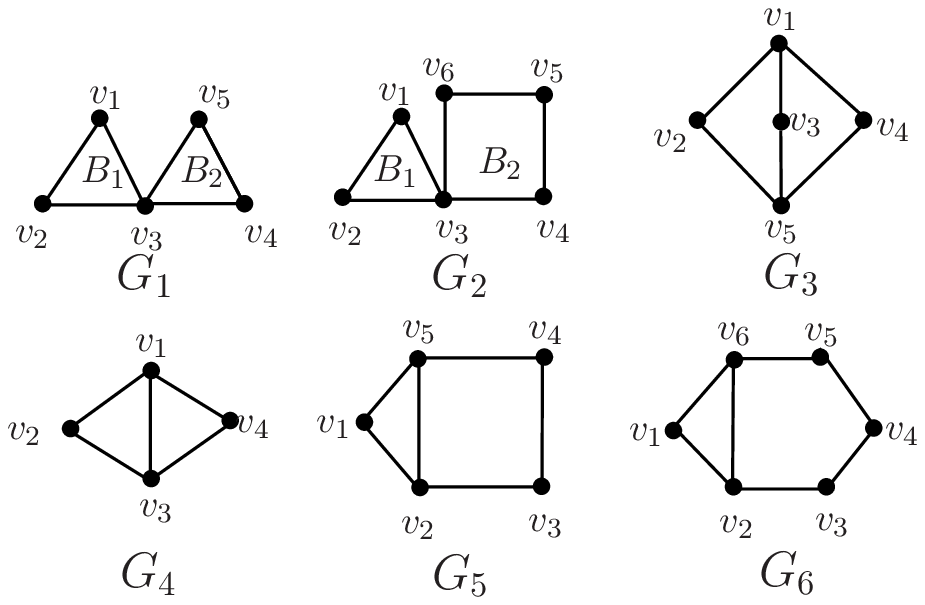}}\\[15pt]

Figure~1. The basic graphs for Lemma \ref{lem8}.
\end{center}
\end{figure}

\begin{thm}[\cite{CLYZ}]\label{thm8}
Let $G$ be a 2-edge-connected graph of order $n~(n\geq 4)$. Then $rx_3(G)\leq n-2$, with equality if and only if $G=C_n$ or $G$ is a spanning subgraph of one of the following graphs: a 3-sun, $K_5-e$, $K_4$, $G_1$, $G_2$, $H_1$, $H_2$, $H_3$, where $G_1$, $G_2$ are defined in Figure 1 and $H_1$, $H_2$, $H_3$ are defined in Figure 2.
\end{thm}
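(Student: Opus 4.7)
The plan is to split the proof into the upper bound $rx_3(G)\le n-2$ and the characterization of the equality case. The upper bound half is the easier of the two, and I would approach it through an open ear decomposition. Since $G$ is 2-edge-connected, write $G=C_0\cup P_1\cup\cdots\cup P_k$ with $C_0$ a cycle and each ear $P_i$ a path whose internal vertices are new and whose endpoints lie in the previously constructed subgraph. Color $C_0$ with $|C_0|-2$ colors using the optimal 3-rainbow coloring of a cycle guaranteed by Theorem~\ref{thm1}. For each ear $P_i$ of length $\ell_i$, introduce $\ell_i-1$ fresh colors on its edges and reuse one already-used color on the remaining edge; since $P_i$ contributes exactly $\ell_i-1$ new vertices, the total is $(|C_0|-2)+\sum_i(\ell_i-1)=n-2$. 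The subtlety is choosing the reused color carefully so that every 3-subset $S\subseteq V(G)$ still admits a rainbow $S$-tree, which is arranged by taking the reused color on $P_i$ to be distinct from the colors along a short rainbow path in the previously built subgraph between the attachment vertices.

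For the equality characterization, one direction—that each listed graph attains $n-2$—must be verified case by case. For $C_n$, Theorem~\ref{thm1} directly gives $rx_3(C_n)=n-2$. For $K_4$, $K_5-e$, the 3-sun, and the graphs $G_1,G_2,H_1,H_2,H_3$ appearing in Figures~1 and~2, I would check by direct combinatorial arguments (all on at most $6$ or $7$ vertices) that any 3-rainbow coloring uses at least $n-2$ colors and exhibit matching colorings.

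The harder direction is showing that no other 2-edge-connected graph attains the bound. My strategy would be the contrapositive: assuming $G$ is 2-edge-connected, $G\ne C_n$, and $G$ is not a spanning subgraph of any listed exception, construct a 3-rainbow coloring using only $n-3$ colors. The main tools are Lemma~\ref{lem3} (partitioning $V(G)$ gives $rx_3(G)\le k-1+\sum rx_3(H_i)$) and Observation~\ref{obs3} (monotonicity under spanning subgraphs), combined with the two color-reuse tricks already seen in the proof of Theorem~\ref{thm4}: on two vertex-disjoint cycles one may reuse a single palette, and on two cycles sharing a vertex one may merge them into a single colored piece with a small color budget. Concretely, if $G$ properly contains a cycle $C$ of length $\ge 4$ together with any additional edge or nontrivial ear, one of these reuse tricks should give the extra saving needed to reach $n-3$ colors.

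The principal obstacle is isolating the exceptional graphs, which amounts to an enumeration for small orders. Because the exception list concentrates at $4\le n\le 6$, one must classify all 2-edge-connected graphs of small order and verify each either is a spanning subgraph of a listed exception or admits a coloring with $n-3$ colors; this is also where one proves exhaustiveness of the list. For $n\ge 7$ the partition/reuse arguments above should eliminate all candidates uniformly, but writing out the small cases cleanly and fitting them against the eight distinct exceptional families is the bulk of the work—and is why the statement is quoted from \cite{CLYZ} rather than redone here.
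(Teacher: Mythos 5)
The paper does not prove this statement at all: Theorem~\ref{thm8} is imported verbatim from \cite{CLYZ}, so there is no internal proof to measure your attempt against. Judged on its own terms, your proposal has a concrete gap in the part you treat as ``the easier half,'' the upper bound $rx_3(G)\le n-2$. You invoke an \emph{open} ear decomposition, which exists precisely for $2$-connected graphs; the theorem is stated for $2$-edge-connected graphs, which in general require closed ears (cycles attached at a single vertex). Worse, your color count charges the initial cycle $C_0$ only $|C_0|-2$ colors, but Theorem~\ref{thm1} gives $rx_3(C_3)=2=|C_0|-1$, not $1$: a rainbow tree spanning the three vertices of a triangle needs two distinctly colored edges. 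A $2$-edge-connected graph need not contain any cycle of length at least $4$ (e.g.\ the friendship graphs, or any cactus all of whose blocks are triangles), so you cannot always choose $C_0$ long; for such graphs your scheme yields $n-1$ colors, not $n-2$. The bound is still true there --- the proof of Theorem~\ref{thm4} shows two triangles sharing a vertex can share one palette of three colors, and Lemma~\ref{lem3} propagates this --- but that is a different mechanism from ``one fresh color per new vertex plus one careful reuse per ear,'' and your reuse step itself (``distinct from the colors along a short rainbow path between the attachment vertices'') is asserted rather than verified; checking $3$-sets with vertices on three different ears is exactly where this needs an argument.

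On the equality characterization, the ``if'' direction is fine and in fact easier than you suggest: by Observation~\ref{obs3}, a lower bound of $n-2$ for each of the eight maximal listed graphs transfers automatically to all their connected spanning subgraphs, so only eight small graphs plus $C_n$ need checking. The ``only if'' direction, however, is the substance of the theorem, and your proposal defers it almost entirely: the two reuse tricks from Theorem~\ref{thm4} show how to save one extra color when a long cycle or two triangles are present, but showing that \emph{every} non-exceptional $2$-edge-connected graph admits an $(n-3)$-coloring, and that the exception list (which reaches up to $6$-vertex graphs such as the $3$-sun and $H_1,H_2,H_3$) is exhaustive, is an enumeration you name but do not carry out. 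As written, the proposal is a plausible plan whose base case is miscounted and whose two hardest steps are placeholders.
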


Since all the 2-edge-connected graphs with the 3-rainbow index  $n-2$ have been characterized in Theorem \ref{thm8}, it remains to characterize the graphs with 3-rainbow index $n-2$ which have cut edges. Notice that the cut edges of a graph must be assigned with distinct colors, our main purpose is to check out how the addition of cut edges to $G$ affect the 3-rainbow index of a 2-connedted graph $G$ when $rx_3(G)=n-2$. In other words, share the colors of cut edges with the colors of the non-cut edges as many as possible.

Given a connected graph $G$ of order $n$, and a coloring $c$ of $G$, we always let $A_1$ be the set of colors assigned to the non-cut edges of $G$ and $A_2$ the set of colors assigned to the cut edges of $G$. For each positive integer $k$, let $N_k=\{1,2,\cdots,k\}$. We always set that $A_2=N_s$, where $s$ is the number of cut edges of $G$. Note that, $A_1$ and $A_2$ may intersect and suppose $|A_1\cap A_2|=p$. We can interchange the colors of cut edges suitably such that $A_1\cap A_2=\{1,2,\cdots,p\}$. Set $A_1\setminus A_2=\{a_1,\cdots, a_t\}$, $t\leq m-s$ and $a_j\in N_{|c|}$.

For a connected graph $G$, a $block$ is a maximal 2-connected subgraph. In this paper, we regard $K_2$ other than a block. An $internal~ cut~ edge$ is a cut edge which is on the unique path joining some two blocks. Denote the cut edges of $G$ by $e_1=x_1y_1,\cdots,e_p=x_py_p$ and the colors of these cut edges by $1,\cdots,p$, respectively. Moreover, if $x_iy_i$ is not an internal cut edge, we always set $d(x_i,B)\leq d(y_i,B)$ where $B$ is an arbitrary block.

Let $H$ be a connected subgraph of $G$, denote by $i\in H$ if the color $i$ appears in $H$. Given a graph $G$, let $G_0$ be its basic graph. Deleting the corresponding edges of $G_0$ in $G$, we obtain a forest. Each component corresponds to a vertex $v$ in $G_0$, denoted by $T(v)$. Denote by $U(v)$ the number of leaves of $T(v)$ in $G$ and $U(G)=\sum_{v\in V(G)}U(v)$. Let $W(v)$ be the number of edges of $T(v)$ whose colors are appeared in $A_1$, that is, $W(v)=|c(T(v))\cap A_1|$.

\begin{figure}[h,t,b,p]
\begin{center}
\scalebox{1}[1]{\includegraphics{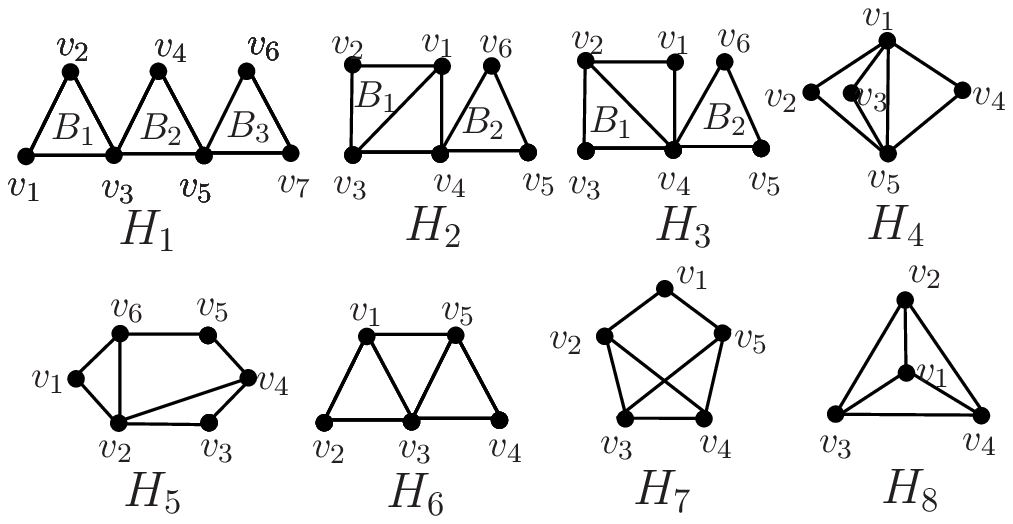}}\\[15pt]

Figure~2. The basic graphs for Lemma \ref{lem9}.
\end{center}
\end{figure}

\subsubsection{Bicyclic graphs with $rx_3(G)\bf{=n-2}$}
First, we introduce some graph classes. Let $G_i$ be the graphs shown in Figure 1, define by $\mathcal{G}^{*}_i$ the set of graphs whose basic graph is $G_i$, where $1\leq i\leq 6$. Set
$\mathcal{G}_1=\{G\in \mathcal{G}^{*}_1|U(v_3)\leq 1\}$,
$\mathcal{G}_2=\{G\in \mathcal{G}^{*}_2|U(v_3)+U(v_i)\leq 1, ~i=4,6\}$, $\mathcal{G}_3=\{G\in \mathcal{G}^{*}_3|U(v_i)+U(v_j)\leq 2,~v_iv_j\in E(G_3)\}$, $\mathcal{G}_4=\{G\in \mathcal{G}^{*}_4|U(v_i)\leq 2,~i=1,3\}$, $\mathcal{G}_5=\{G\in \mathcal{G}^{*}_5|U(v_2)+U(v_3)\leq 2,~U(v_4)+U(v_5)\leq 2\}$, $\mathcal{G}_6=\{G\in \mathcal{G}^{*}_6|U(v_2)=U(v_6)=0,~U(v_4)\leq 1,~U(v_4)+U(v_i)\leq 2,~i=3,5 \}$ and set $\mathcal{G}=\{\mathcal{G}_1,\mathcal{G}_2,\cdots,\mathcal{G}_6\}$.

\begin{lem}\label{lem8}
Let $G$ be a connected bicyclic graph of order $n$. Then $rx_3(G)=n-2$ if and only if $G\in \mathcal{G}$.
\end{lem}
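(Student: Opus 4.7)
The plan is to split the lemma into two implications and deal with each. Every bicyclic graph satisfies $rx_3(G) \le n-2$: Observation~\ref{obs1} gives $rx_3(G) \le n-1$, and Theorem~\ref{thm4} forbids equality since $G$ is neither a tree nor a unicyclic graph of girth $3$. So the content of the lemma is (a)~every $G \in \mathcal{G}$ has $rx_3(G) \ge n-2$, and (b)~every bicyclic $G$ with $G \notin \mathcal{G}$ has $rx_3(G) \le n-3$.

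For (b), let $G_0$ be the basic graph of $G$ and $s$ the number of cut edges, so $n = |V(G_0)| + s$. If $G_0 \notin \{G_1,\dots,G_6\}$, then $G_0$ is a $2$-edge-connected bicyclic graph not in the equality class of Theorem~\ref{thm8}, hence $rx_3(G_0) \le |V(G_0)| - 3$, and Lemma~\ref{lem1} yields $rx_3(G) \le rx_3(G_0) + s \le n-3$. Otherwise $G_0 = G_i$ and some $U$-inequality in the definition of $\mathcal{G}_i$ is violated at a vertex $v_j$. For each such subcase I would exhibit an explicit $(n-3)$-edge-coloring: color the cut edges with the $s$ distinct colors demanded by Observation~\ref{obs2}, color the non-cut edges of $G_0$ with a $3$-rainbow coloring of $G_0$ using $|V(G_0)|-2$ colors, and then merge one color across the two palettes by recoloring a carefully chosen non-cut edge of $G_0$ with one of the cut-edge colors. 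The surplus pendant leaves at $v_j$ guaranteed by the violated inequality produce alternative rainbow Steiner trees for every $3$-subset affected by the recoloring, while triples disjoint from $v_j$'s neighborhood are unchanged.

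For (a) I would argue by contradiction. Assume a $3$-rainbow coloring $c$ of $G \in \mathcal{G}_i$ uses only $n-3$ colors. By Observation~\ref{obs2} the $s$ cut edges occupy $s$ distinct colors $A_2$; the restriction $c|_{G_0}$ is itself a $3$-rainbow coloring of $G_0$, because for any triple $S \subseteq V(G_0)$ a rainbow $S$-tree in $G$ can be pruned to lie inside $G_0$. Hence $|A_1| \ge rx_3(G_i) = |V(G_0)|-2$, and the identity $|c| = |A_1| + s - |A_1 \cap A_2|$ forces $|A_1 \cap A_2| \ge 1$. Fix a reused color $i$ appearing both on a cut edge $e_i = x_iy_i$ (with $y_i$ a leaf under the paper's convention when $e_i$ is non-internal) and on a non-cut edge $f$ of $G_0$. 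I would then choose a triple $S = \{y_i, u, w\}$ with $u, w \in V(G_0)$ such that every rainbow $S$-tree in $G$ is forced to contain both $e_i$ and $f$; the duplicated color $i$ then destroys rainbowness. The $U$-constraint defining $\mathcal{G}_i$ is precisely what precludes an alternative pendant leaf in $T(x_i)$ or $T(v_j)$ that could otherwise support a different rainbow $S$-tree avoiding $f$.

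The main obstacle is the combined case analysis over $G_1,\dots,G_6$: for each basic graph and each violated $U$-inequality one must identify, using the geometry of $G_i$, which non-cut edge $f$ is forced to carry the reused color in (a) and which local recoloring works in (b). I expect $\mathcal{G}_6$ to be the most delicate case because its definition stacks four interlocking $U$-conditions, and the internal-cut-edge subcase (where the convention $d(x_i,B)\le d(y_i,B)$ does not single out a leaf) to be the trickiest, as the reused color must be propagated along the internal path via the pendant trees of its interior vertices. Lemma~\ref{lem3} applied to the partition $\{T(v):v\in V(G_0)\}$ together with the cycle blocks of $G_0$, and the bookkeeping counts $U(v)$ and $W(v)$ introduced in Section~2, provide the right framework to manage all these constructions uniformly.
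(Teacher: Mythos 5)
Your high-level strategy coincides with the paper's: reduce to $G_0=G_i$ via Theorem~\ref{thm8} and Lemma~\ref{lem1}, exhibit explicit colorings with colors shared between cut edges and core edges when a $U$-constraint is violated, and, for the converse, force a contradiction through triples containing the pendant endpoint $y_i$ of a cut edge whose color is reused in a block. However, the proposal defers essentially all of the content. The lemma \emph{is} the case analysis: six basic graphs, each with several violated-constraint subcases needing a verified coloring, and for the converse a further stratification by $|A_1\cap A_2|\in\{1,2,3\}$ and by which pendant trees $T(v_j)$ carry the reused colors. None of these is carried out, and phrases such as ``I would exhibit'' and ``I expect $\mathcal{G}_6$ to be the most delicate'' leave the entire verification burden unpaid.

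Two of your concrete sub-recipes are also flawed as stated. First, for direction (b), ``color $G_0$ with an optimal $(|V(G_0)|-2)$-coloring and merge one color'' is not what works: the paper's colorings of the $G_i$ use $|V(G_0)|-1$ or even $|V(G_0)|$ colors on the core with two or three of them shared with cut edges (e.g.\ $c_{\ell}(G_3)=a_1a_2a_2123$ shares three colors, keeping $|A_1\setminus A_2|=|V(G_0)|-3$), precisely because a single recolored edge on top of an optimal core coloring generally breaks rainbowness for triples through the corresponding pendant leaf; the arithmetic $|A_1\setminus A_2|=|V(G_0)|-3$ can be achieved many ways, and the viable ones are not of the form you describe. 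Second, for direction (a), the claim that $c|_{G_0}$ is a $3$-rainbow coloring of $G_0$ ``by pruning'' is not available when $G$ has internal cut edges: $G_0$ is a contraction minor, not a subgraph, and any tree joining vertices of distinct blocks must traverse the connecting path of cut edges. This is exactly why the paper needs its Claims~1 and~2 (each block absorbs at most one color of $A_2$, and the connecting path $P_{\ell}$ absorbs none) before it can even bound $|A_1\cap A_2|$ and begin the triple-by-triple contradiction. Without these block-level restrictions your choice of a forcing triple $\{y_i,u,w\}$ is not justified.
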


\begin{proof}
Suppose that $G$ is a graph with $rx_3(G)=n-2$ but $G\notin \mathcal{G}$. Let $G_0$ be the basic graph of $G$, then $G_0$ is a 2-edge-connected bicyclic graph. If $G_0\neq G_i$, by Theorem \ref{thm8}, $rx_3(G_0)\leq |G_0|-3$. Moreover, by Lemma \ref{lem1}, we have $rx_3(G)\leq rx_3(G_0)+|G|-|G_0|\leq n-3$. Hence $G_0=G_i$. Next we show that if $G\in \mathcal{G}^{*}_i\setminus \mathcal{G}_i$, then $rx_3(G)\leq n-3$, where $1\leq i\leq 6$. As pointed out before, all the cut edges of $G$ are colored with $1,2,\cdots$. We only provide a coloring $c_{\ell}$ of $G_0$, namely, color the corresponding edges of $G$, with parts of colors used in cut edges, and the position of cut edges will be determined as following:  $\{1,2,\cdots,q\}\subseteq T(v)$ means to assign colors $\{1,2,\cdots,q\}$ to $q$ leaves of $T(v)$ arbitrarily. If $G\in \mathcal{G}^{*}_1\setminus \mathcal{G}_1$, then $U(v_3)\geq 2$, set $c_{\ell}(G_1)=1a_1a_2a_2a_12$ and $\{1,2\}\subseteq T(v_3)$. If $G\in \mathcal{G}^{*}_2\setminus \mathcal{G}_2$, then $U(v_3)+U(v_4)\geq 2$ or $U(v_3)+U(v_6)\geq 2$. By contracting $v_3v_4$ or $v_3v_6$, we obtain a graph $G'$ belonging to $\mathcal{G}^{*}_1\setminus \mathcal{G}_1$. Then the coloring of $G$ can be obtained easily from $G'$ by Lemma \ref{lem1}. If $G\in \mathcal{G}^{*}_3\setminus \mathcal{G}_3$, then there is an edge $v_iv_j\in E(G_3)$ such that $U(v_i)+U(v_j)\geq 3$. By symmetry, there exist four cases for $G$: (1) $U(v_1)\geq3$; (2) $U(v_1)\geq2$, $U(v_2)\geq1$; (3) $U(v_1)\geq1$, $U(v_2)\geq2$; (4) $U(v_2)\geq3$. Set $c_{\ell}(G_3)=a_1a_2a_2123$ and set
$\{1,2,3\}\subseteq T(v_1)$ for (1);
$\{1\}\subseteq T(v_2)$, $\{2,3\}\subseteq T(v_1)$ for (2);
$\{1\}\subseteq T(v_1)$, $\{2,3\}\subseteq T(v_2)$ for (3);
$\{1,2,3\}\subseteq T(v_2)$ for (4). If $G\in \mathcal{G}^{*}_4\setminus \mathcal{G}_4$, then $U(v_1)\geq3$ or $U(v_3)\geq3$. By symmetry, suppose $U(v_1)\geq 3$ and set $c_{\ell}(G_4)=123a_1a_1$ and $\{1,2,3\}\subseteq T(v_1)$. If $G\in \mathcal{G}^{*}_5\setminus \mathcal{G}_5$, then by contracting $v_2v_3$ or $v_4v_5$, we obtain a graph $G'$ belonging to $\mathcal{G}^{*}_4\setminus \mathcal{G}_4$. Now consider $G\in \mathcal{G}^{*}_6\setminus \mathcal{G}_6$. Then $U(v_2)\geq 1$, or $U(v_6)\geq 1$, or $U(v_4)\geq 2$, or $U(v_4)+U(v_3)\geq 3$, or $U(v_4)+U(v_5)\geq 3$. For the last two cases, it belongs to $\mathcal{G}^{*}_5\setminus \mathcal{G}_5$ by contracting $v_3v_4$ or $v_4v_5$. If $U(v_2)\geq1$, set $c_{\ell}(G_6)=a_3a_2a_4a_4a_2a_31$ and $\{1\}\subseteq T(v_2)$. If $U(v_4)\geq2$, set $c_{\ell}(G_6)=a_31a_22a_1a_2a_1$ and $\{1,2\}\subseteq T(v_4)$. It is not hard to check that the colorings above make $G$ rainbow connected with $n-3$ colors, thus $rx_3(G)\leq n-3$.

Conversely, let $G$ be a bicyclic graph such that $G\in \mathcal{G}$. Assume, to the contrary, that $rx_3(G)\leq n-3$. Then there exists a rainbow coloring $c$ such that $A_1\cup A_2=N_{n-3}$. By Theorem \ref{thm8}, we focus on the graphs with cut edges and $|A_1\cap A_2|\geq 1$. We write $d_{G_i}(u,v,w)$ to mean that the number of edges of a $\{u,v,w\}$-tree in $G$ which correspond to the edges of $G_i$, the basic of $G$. We divide into three cases.

{\bf Case 1.}~~$G\in \mathcal{G}_1\cup \mathcal{G}_2$.

First assume that $G\in \mathcal{G}_2$ and we give the following claims. If there is a nontrivial path $P_{\ell}$ connecting $B_1$ and $B_2$ in $G$, then denote its ends by $v_3'(\in B_1)$ and $v_3''(\in B_2)$.

{\bf Claim 1.}~~Each block $B_i$ has at most one edge use the color from $A_2$, where $i\in \{1,2\}$. Moreover, if a color of $A_2$ appears in $B_i$, then the other edges of $B_i$ must be assigned with different colors in $A_1\setminus A_2$.

\emph{Proof.}~Suppose two edges of $B_1$ are colored with 1, 2, respectively. We also set $d(x_i,B_1)\leq d(y_i,B_1)$, where $x_iy_i$ belongs to $P_{\ell}$. Since the cut edges colored with 1 and 2 should be contained in the rainbow tree whose vertices contain $y_1$ and $y_2$, by deleting the edges assigned with 1 and 2 in $B_1$, $G$ is disconnected. Let $w$ be a vertex in the component that does not contain $y_1$, then there is no rainbow tree connecting $\{y_1,y_2,w\}$, a contradiction. We can take the similar argument for the other cases when two edges of $B_1$ $(B_2)$ are colored with 1 or two edges of $B_2$ are colored with 1, 2, respectively.

Now suppose $1\in B_i\cap A_2$ and two edges of $B_i$ have the same color $a_1$. Let $w_1$, $w_2$ be the end vertices of the edge assigned with 1, then $\{y_1,w_1,w_2\}$ has no rainbow tree.  $\square$

{\bf Claim 2}~~The colors of the path $P_{\ell}$ can not appear in $A_1$.

\emph{Proof.}~ Assume $e$ is the edge of $P_{\ell}$ colored with 1. The color 1 can not appear in $B_1$. Otherwise suppose the three edges of $B_1$ are assigned with 1, $a_1$ and $a_2$, respectively. Consider $\{v_1,v_2,v_5\}$, then $c(v_3^{''}v_4),c(v_4v_5)\in \{2,a_3\}$ or $c(v_3^{''}v_6),c(v_5v_6)\in \{2,a_3\}$. Without loss of generality, suppose $c(v_3^{''}v_4),c(v_4v_5)\in \{2,a_3\}$, then by Claim 1, $c(v_3^{''}v_6),c(v_5v_6)\in \{a_1,a_2\}$, thus $\{v_1,v_2,v_6\}$ has no rainbow tree. On the other hand, 1 can not be in $B_2$. It is easy to see that neither $c(v_3^{''}v_4)$ nor $c(v_3^{''}v_6)$ can be 1 by considering $\{v_1,v_2,v_6\}$. If $c(v_5v_6)=1$, consider $\{v_1,v_5,v_6\}$, $\{v_2,v_5,v_6\}$, then $c(v_1v_3^{'}),c(v_2v_3^{'})\in A_2$, a contradiction to Claim 1. $\square$

By Claim 1, we have $1\leq|A_1\cap A_2|\leq2$ and only color 1 and 2 can exist in $A_1$. We should discuss all the situations according to which cut edges are colored with 1, 2. By the definition of $G$, $U(v_3)=1$ or $U(v_3)=0$. By similarity, we only deal with the former case. First assume $|A_1\cap A_2|=1$, then $A_1=\{1,a_1,a_2,a_3\}$.
We consider the subcase when $1\in T(v_3)$. In this case we claim that the color $1$ appears in neither $B_1$ nor $B_2$. Indeed, if $c(v_3''v_6)=1$, since every tree whose vertices contain $y_1$ must contain the cut edge colored with 1, $d_{G_2}(y_1,v_1,v_6)=4$. Thus $\{y_1,v_1,v_6\}$ has no rainbow tree. If now $c(v_5v_6)=1$, then consider $\{y_1,v_5,v_6\}$, $\{y_1,v_5,v_1\}$, $\{y_1,v_5,v_2\}$ successively, we have $c(v_1v_3')=c(v_2v_3')=c(v_3''v_6)$, leading to a contradiction when considering $\{v_1,v_2,v_6\}$. Else if $c(v_1v_3')=1$, then $\{y_1,v_1,v_5\}$ has no rainbow tree. The last possibility is that $c(v_1v_2)=1$, we may set $c(v_1v_3)=a_1$, $c(v_2v_3)=a_2$. Consider $\{y_1,v_1,v_4\}$, $\{y_1,v_2,v_4\}$, $\{y_1,v_1,v_6\}$, $\{y_1,v_2,v_6\}$ successively, we have $c(v_3''v_4)=c(v_3''v_6)=a_3$ and 1 can not appear in $B_2$, hence $\{v_1,v_4,v_6\}$ has no rainbow tree. The other subcases are similar.

Thus $|A_1\cap A_2|=2$, $A_1=\{1,2,a_1,a_2,a_3\}$. By Claim 1, set $1\in B_1$, $2\in B_2$,  and the other edges in each block have distinct colors. If $1,2\in T(v_3)$, assume that $d(y_1,T(v_3))>d(y_2,T(v_3))$, there always exist two vertices which come from different blocks such that there is no rainbow tree connecting them and $y_1$.
If $1\in T(v_3)$, $2\in T(v_1)$, the most difficult case is that $c(v_1v_2)=1$, $c(v_5v_6)=2$. In this case, consider $\{y_2,v_5,v_6\}$, forcing that one of $v_1v_3'$, $v_3''v_4$, $v_3''v_6$, $v_4v_5$ is colored with 1, contradicting to Claim 1. With an analogous argument, we would get a contradiction if 1, 2 are in other cut edges of $G$.

For $G\in \mathcal{G}_1$, it can be obtained by contracting an edge of a graph in $\mathcal{G}_2$. Then by Lemma \ref{lem1}, $rx_3(G)\geq n-2$.

{\bf Case 2.}~~$G\in \mathcal{G}_3$.

First note that each path from $v_1$ to $v_5$ in $G_3$ can have at most one color in $A_2$. Thus $|A_1\cap A_2|\leq3$. On the other hand, noticing that $d_{G_3}(v_2,v_3,v_4)=3> 2$, all the cases satisfying $W(v_1)=W(v_5)=0$ and $W(v_2)$, $W(v_3)$, $W(v_4)\leq 1$ are easy to get a contradiction, so we omit them here.

First assume $|A_1\cap A_2|=1$, then $A_1=\{1,a_1,a_2\}$.
If $1\in T(v_1)$, consider $\{y_1,v_2,v_3\}$, $\{y_1,v_2,v_4\}$ and $\{y_1,v_3,v_4\}$ successively, $v_1v_2$, $v_1v_3$, $v_1v_4$ must be colored with distinct colors from $A_1\setminus \{1\}$, which is impossible.

Assume now $|A_1\cap A_2|=2$, then $A_1=\{1,2,a_1,a_2\}$.
If $1,2\in T(v_1)$, then consider $\{y_1,y_2,v_5\}$, without loss of generality, set $c(v_1v_2)=a_1$, $c(v_2v_5)=a_2$. Thus $c(v_1v_3)$ can be neither 1 nor 2, otherwise there is no rainbow $\{y_1,y_2,v_3\}$-tree. On the other hand, $c(v_1v_3)$ cannot be $a_1$, otherwise $c(v_3v_5)=i$($i=1,2$), then $\{y_i,v_2,v_3\}$ has no rainbow tree. Meanwhile, $v_1v_3$ cannot be colored with $a_2$, otherwise $c(v_3v_5)=i$($i=1,2$), then $\{y_i,v_3,v_5\}$ has no rainbow tree.
If $1\in T(v_1)$, $2\in T(v_5)$, then every path from $v_1$ to $v_5$ must color $\{a_1,a_2\}$, a contradiction to $|A_1\cap A_2|=3$.
If $1,2\in T(v_2)$, then by the same reason, we conclude that $c(v_1v_2)$, $c(v_2v_5)\notin \{1,2\}$ and we may set $c(v_1v_3)=1$. But now $\{y_1,v_1,v_3\}$ has no rainbow tree.
If $1\in T(v_1)$, $2\in T(v_2)$. By considering $\{y_1,y_2,v_3\}$, $\{y_1,y_2,v_4\}$, $\{y_1,y_2,v_5\}$, we may set $c(v_1v_3)=c(v_1v_4)=c(v_2v_5)=a_1$, this force $c(v_3v_5)=i$($i=1,2$). However, there is no rainbow tree connecting $\{y_i,v_3,v_5\}$.

Thus $|A_1\cap A_2|=3$, $A_1=\{1,2,3,a_1,a_2\}$. If $1,2,3\in T(v_1)$, since $U(v_1)\leq 2$, we may assume that $y_2$ is on the unique path from $y_1$ to $v_1$. Thus one path from $v_1$ to $v_5$ must be colored with $\{a_1,a_2\}$, a contradiction to $|A_1\cap A_2|=3$.
If $1,2\in T(v_2)$, $3\in T(v_5)$, and without loss of generality, $y_2$ is on the unique path from $y_1$ to $v_2$. Considering $\{y_1,v_3,y_3\}$ and $\{y_1,v_4,y_3\}$, we may set $c(v_1v_2)=a_1$, $c(v_1v_3)=c(v_1v_4)=a_2$. But there is no rainbow $\{y_1,v_3,v_4\}$-tree. Each other case is similar or easier.

{\bf Case 3.}~~$G\in \mathcal{G}_4\cup\mathcal{G}_5\cup\mathcal{G}_6$.

First let $G\in \mathcal{G}_6$. Similarly, each path from $v_2$ to $v_6$ in $G_6$ can have at most one color in $A_2$. Thus we have $1\leq|A_1\cap A_2|\leq3$. Assume first $|A_1\cap A_2|=1$, $A_1=\{1,a_1,a_2,a_3\}$.

We only focus on the case that $1\in T(v_4)$. To make sure there are rainbow trees connecting $\{y_1,v_1,v_3\}$ and $\{y_1,v_1,v_5\}$, only $c(v_2v_6)$ can be 1, but now $\{y_1,v_2,v_6\}$ has no rainbow tree.

Assume then $|A_1\cap A_2|=2$, $A_1=\{1,2,a_1,a_2,a_3\}$. If $1,2\in T(v_1)$, we may set $c(v_1v_2)=a_1$, $c(v_2v_3)=a_2$, $c(v_3v_4)=a_3$ by considering $\{y_1,y_2,v_4\}$. $c(v_5v_6)$ can be neither 1 nor 2, otherwise $\{y_1,y_2,v_5\}$ has no rainbow tree. Moreover, $c(v_4v_5)$ can be neither 1 nor 2, otherwise when $c(v_4v_5)=i$($i=1,2$), there is no rainbow $\{y_i,v_4,v_5\}$-tree. Thus $v_1v_6$, $v_2v_6$ must use colors $\{1,2\}$, but now $\{y_1,y_2,v_6\}$ has no rainbow tree. If $1,2\in T(v_3)$, first we claim that at most one edge of the triangle $v_1v_2v_6$ uses a color from $\{1,2\}$. Otherwise if $c(v_1v_2)$, $c(v_2v_6)\in \{1,2\}$, then $\{y_1,y_2,v_1\}$ has no rainbow tree. If $c(v_1v_6)$, $c(v_2v_6)\in \{1,2\}$, the rest non-cut edges must color $\{a_1,a_2,a_3\}$. It is easy to verify that either $\{y_1,v_1,v_5\}$ or $\{y_1,v_1,v_4\}$ has no rainbow tree. So the longest path from $v_2$ to $v_6$ has an edge colored with 1 or 2. However, we will show that it is impossible. It is easy to check that $c(v_2v_3),c(v_3v_4)\notin \{1,2\}$. If $c(v_5v_6)\in \{1,2\}$, then we may set $c(v_5v_6)=1$. Consider $\{y_1,v_1,v_5\}$ and $\{y_1,v_5,v_6\}$, then $c(v_1v_2)=c(v_2v_6)=2$, a contradiction. It is similar to check that $c(v_4v_5)$ can not be 1 or 2, a contradiction.

Now assume that $|A_1\cap A_2|=3$, $A_1=\{1,2,3,a_1,a_2,a_3\}$.
If $1\in T(v_1)$, $2,3\in T(v_3)$. Again, we may set $c(v_1v_2)=a_1$, $c(v_2v_3)=a_2$. Thus $c(v_1v_6),c(v_2v_6)\in \{1,2,3\}$. If $c(v_1v_6),c(v_2v_6)\in \{1,i\}$, then there is a contradiction by considering $\{y_1,y_i,v_6\}$ ($i=2,3$). Thus we may set that $c(v_1v_6)=2$, $c(v_2v_6)=3$. By considering $\{y_1,y_3,v_4\}$ and $\{y_1,y_3,v_5\}$, we get that $c(v_3v_4)=c(v_5v_6)=a_3$, $c(v_4v_5)=1$, but now $\{y_1,v_4,v_5\}$ has no rainbow tree.
If $1\in T(v_3)$, $2,3\in T(v_5)$, then we set $v_3v_4=a_1$, $v_4v_5=a_2$. If $c(v_2v_6)=i$, $c(v_5v_6)=j$, $i,j\in\{1,2,3\}$, then $\{y_i,y_j,v_6\}$ has no rainbow tree. The only possibility is $c(v_2v_3)=2$, $c(v_2v_6)=3$, $c(v_5v_6)=a_3$. However, $\{y_1,y_2,v_1\}$ has no rainbow tree.

For $G\in \mathcal{G}_5$, notice that $|A_1\cap A_2|\leq3$. If $U(v_2)=0$ or $U(v_5)=0$, then $G$ can be obtained by contracting an edge of a graph in $\mathcal{G}_6$. Then by Lemma \ref{lem1}, $rx_3(G)\geq n-2$. Thus we need to consider the case when $W(v_2)\geq 1$ and $W(v_5)\geq 1$. If $|A_1\cap A_2|=2$, then suppose $1\in T(v_2)$, $2\in T(v_5)$. Consider $\{y_1,y_2,v_3\}$, $\{y_1,y_2,v_4\}$, we have $c(v_2v_3)$, $c(v_2v_5)$, $c(v_4v_5)\in \{a_1,a_2\}$ and $c(v_2v_3)=c(v_4v_5)$. But now $c(v_3v_4)=i$ ($i=1,2$), then there is no rainbow $\{y_i,v_3,v_4\}$-tree. If $|A_1\cap A_2|=3$, then $A_1=\{1,2,3,a_1,a_2\}$. If $1\in T(v_1)$, $2\in T(v_2)$, $3\in T(v_5)$, then consider $\{y_1,y_2,y_3\}$, we have that two of $v_1v_2$, $v_1v_5$, $v_2v_5$ have colors outside $A_2$, contradicting to $|A_1\cap A_2|=3$. If $1\in T(v_2)$, $2\in T(v_5)$, $3\in T(v_2)$ and we may assume that $y_3$ is on the unique path from $y_1$ to $v_2$. Then consider $\{y_1,v_3,y_3\}$ and $\{y_1,v_4,y_3\}$, we have $c(v_2v_3)=c(v_4v_5)$, thus $c(v_3v_4)$ can not be in $A_2$, contradicting to $|A_1\cap A_2|=3$. If $1\in T(v_2)$, $2\in T(v_5)$, $3\in T(v_i)$ ($i=3,4$), then consider $\{y_1,y_2,y_3\}$, we have that $c(v_2v_5)$ is in $A_1\setminus A_2$, contradicting to $|A_1\cap A_2|=3$.

Finally, for a graph $G$ belonging to $\mathcal{G}_4$, it can be obtained by contracting an edge of a graph in $\mathcal{G}_3\cup \mathcal{G}_6$. Then by Lemma \ref{lem1}, $rx_3(G)\geq n-2$.

Combining all the cases above, we have $rx_3(G)\geq n-2$ for $G\in \mathcal{G}$. By Theorem \ref{thm4}, it follows that $rx_3(G)=n-2$.
\end{proof}

\subsubsection{Tricyclic graphs with $rx_3(G)\bf{=n-2}$}
Define by $\mathcal{H}^{*}_i$ the set of graphs whose basic graph is $H_i$, where  $H_i$ is shown in Figure 2 and $1\leq i\leq 8$.

Now, we introduce another graph class $\mathcal{H}$. Set $\mathcal{H}=\{\mathcal{H}_1,\mathcal{H}_2,\cdots,\mathcal{H}_8\}$, where
$\mathcal{H}_1=\{G\in \mathcal{H}^{*}_1|U(G)=0\}$,
$\mathcal{H}_2=\{G\in \mathcal{H}^{*}_2|U(v_i)\leq 1, ~U(v_j)=0,~i=5,6,~j=1,3,4\}$,
$\mathcal{H}_3=\{G\in \mathcal{H}^{*}_3|U(v_2)\leq 1,~ U(v_5)+U(v_6)\leq 1,~ U(v_i)=0,~i=1,3,4\}$, $\mathcal{H}_4=\{G\in \mathcal{H}^{*}_4|U(v_i)\leq 1,~ U(v_j)\leq 2, ~U(v_i)+U(v_j)\leq 1,~U(v_j)+U(v_k)\leq 3,~ i=1,5,~ j,k=2,3,4\}$,
$\mathcal{H}_5=\{G\in \mathcal{H}^{*}_5|U(v_i)\leq 1,~U(v_j)=0,~i=1,3,5,~j=2,4,6\}$, $\mathcal{H}_6=\{G\in \mathcal{H}^{*}_6|U(v_3)=0,~ U(v_i)\leq 1, ~ U(v_1)+U(v_5)\leq 1,~ i=1,2,4,5\}$,
$\mathcal{H}_7=\{G\in \mathcal{H}^{*}_2|U(v_2)+U(v_4)\leq 1, ~U(v_3)+U(v_5)\leq 1,~ U(v_5)+U(v_1)\leq 1,~ U(v_j)+U(v_{j+1})\leq 1,~ j=1,2,4\}$,
$\mathcal{H}_8=\{G\in \mathcal{H}^{*}_8|U(v_i)\leq 2,~U(v_i)+U(v_j)+U(v_k)\leq 3, ~i,j,k=1,2,3,4\}$.

\begin{lem}\label{lem9}
Let $G$ be a connected tricyclic graph of order $n$. Then $rx_3(G)=n-2$ if and only if $G\in \mathcal{H}$.
\end{lem}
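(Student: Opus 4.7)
My plan is to follow the template of Lemma \ref{lem8} almost verbatim, adapted to the tricyclic setting with the eight basic graphs $H_1,\dots,H_8$ of Figure~2. As in the bicyclic case, the argument splits into two directions, with each direction further branching according to which basic graph $G_0=H_i$ of $G$ arises.

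For the ``only if'' direction, suppose $G$ is a connected tricyclic graph with $rx_3(G)=n-2$. Its basic graph $G_0$ is $2$-edge-connected and tricyclic, so by Theorem \ref{thm8} either $G_0\in\{H_1,\dots,H_8\}$ or $rx_3(G_0)\le |G_0|-3$; in the latter case Lemma \ref{lem1} immediately gives $rx_3(G)\le n-3$, a contradiction. So $G_0=H_i$ for some $i$, and it remains to show that if $G\in\mathcal{H}^*_i\setminus\mathcal{H}_i$ then $rx_3(G)\le n-3$. For each $i$ I would produce an explicit coloring $c_\ell(H_i)$ that uses a few ``new'' colors $a_1,a_2,\dots$ and recycles the cut-edge colors $1,2,\dots$ by placing selected cut edges as leaves of the trees $T(v)$, exactly as in the proof of Lemma \ref{lem8}. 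Whenever possible I would reduce a case to one already handled: contracting a suitable edge of $H_i$ yields a tricyclic graph of smaller order or one of the bicyclic basic graphs $G_j$, so if the resulting graph lies in some $\mathcal{H}^*_j\setminus\mathcal{H}_j$ or $\mathcal{G}^*_j\setminus\mathcal{G}_j$ the previous colorings together with Lemma \ref{lem1} suffice.

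For the ``if'' direction, let $G\in\mathcal{H}$ and assume for contradiction that a $3$-rainbow coloring $c$ with $|c|=n-3$ exists. The two claims that drove the bicyclic proof carry over with the same deletion argument: within any block $B$ of $G$ at most one edge can be assigned a color from $A_2$, and if such a color $j$ does appear in $B$ then the other edges of $B$ receive pairwise distinct colors from $A_1\setminus A_2$; moreover, no color used on the interior of a cut-edge path joining two blocks may reappear inside an adjacent block. These constraints force $|A_1\cap A_2|\le 3$ uniformly, and I would then enumerate the subcases $|A_1\cap A_2|=1,2,3$ together with the placement of the shared colors $1,2,3$ among the sets $T(v)$ for $v\in V(H_i)$. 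In each subcase I would exhibit a specific triple $\{y_i,u,w\}$ that admits no rainbow $3$-tree, typically by forcing two distinct cut-edge colors to land on both of two internally disjoint $(u,w)$-paths in $H_i$. Many configurations again collapse to bicyclic ones via contraction, so Lemma \ref{lem8} finishes them.

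The main obstacle is the sheer bookkeeping: there are eight basic graphs, each with several cut-edge attachment vertices and several places the leaves of $T(v)$ may sit, and the enumeration of $|A_1\cap A_2|\in\{1,2,3\}$ together with the placements of shared colors produces a large number of subcases. The key to keeping this tractable is to exploit the automorphism symmetries of each $H_i$, which collapse whole families of subcases, and to use edge contractions aggressively so as to reduce back either to Lemma \ref{lem8} or to a previously handled $\mathcal{H}^*_j$ with $j<i$. I expect $H_4$, $H_7$, and $H_8$, whose tricyclic structure is least symmetric and admits the largest legitimate value of $|A_1\cap A_2|$, to require the most delicate analysis; for these I would isolate the irreducible configurations first and only afterwards dispatch the reducible ones by contraction.
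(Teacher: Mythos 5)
Your overall architecture matches the paper's proof exactly: reduce the basic graph to one of $H_1,\dots,H_8$ via Theorem \ref{thm8} and Lemma \ref{lem1}, give explicit colorings $c_{\ell}(H_i)$ with recycled cut-edge colors (reducing to Lemma \ref{lem8} by contraction or subgraph containment where possible) for the ``only if'' direction, and enumerate $|A_1\cap A_2|$ together with the placement of shared colors among the $T(v)$ for the ``if'' direction. However, there is a genuine gap in the structural claims you use to bound that enumeration. You assert that the two claims of Lemma \ref{lem8} ``carry over with the same deletion argument'': that every block contains at most one edge colored from $A_2$, and that consequently $|A_1\cap A_2|\leq 3$ uniformly. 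The deletion argument behind Claim 1 of Lemma \ref{lem8} works because the blocks there are triangles, so removing the two $A_2$-colored edges disconnects the graph and isolates a vertex $w$ with no rainbow $\{y_1,y_2,w\}$-tree. Several of the $H_i$ have denser blocks --- e.g.\ the bicyclic block of $H_2$ and $H_8=K_4$ itself --- for which removing two edges does not disconnect the block, so the claim does not follow by that argument; where an analogous conclusion does hold it must be re-derived by a forcing argument specific to each $H_i$ (as the paper does for $\mathcal{H}_8$, where the intermediate configuration even has three block edges sharing one color, contradicting your subsidiary claim that the remaining block edges get pairwise distinct colors).

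The bound $|A_1\cap A_2|\leq 3$ is moreover simply false for several classes: for $G\in\mathcal{H}_2$, $\mathcal{H}_3$ and $\mathcal{H}_4$ the correct a priori bound is $|A_1\cap A_2|\leq 4$, and the subcase $|A_1\cap A_2|=4$ must be explicitly refuted (the paper does so for $\mathcal{H}_4$ by considering $\{y_1,y_3,y_4\}$ and $\{y_2,y_3,y_4\}$). As written, your enumeration stops at $3$ and would leave these configurations unexamined, so the ``if'' direction is incomplete. The fix is to replace the uniform block claim by per-case bounds derived from the structure of each $H_i$ (e.g.\ ``each path between the two branch vertices carries at most one color of $A_2$,'' or Steiner-distance counts such as $d_{H_5}(v_1,v_3,v_5)=4$ against $|A_1\setminus A_2|=3$), and then to extend the enumeration up to the correct maximum of $|A_1\cap A_2|$ in each case.
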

\begin{proof}
Suppose that $rx_3(G)=n-2$ but $G\notin \mathcal{H}$. Let $G_0$ be the basic graph of $G$. Similar to Lemma \ref{lem8}, we have $G_0=H_i$ and we can rainbow color $G$ with $n-3$ colors for $G\in \mathcal{H}_i^{*}\setminus \mathcal{H}_i$, $i=1,\cdots,8$.

If $G\in \mathcal{H}_1^{*}\setminus \mathcal{H}_1$, then if $U(v_2)\geq 1$, set $c_{\ell}(H_1)=a_41a_1a_2a_2a_4a_3a_3a_4$ and $\{1\}\subseteq T(v_2)$; if $U(v_3)\geq 1$, set $c_{\ell}(H_1)=a_4a_1a_11a_3a_4a_2a_2a_4$ and $\{1\}\subseteq T(v_3)$; if $U(v_4)\geq 1$, set $c_{\ell}(H_1)=a_3a_2a_2a_11a_3a_4a_4a_1$ and $\{1\}\subseteq T(v_4)$.

If $G\in \mathcal{H}_2^{*}\setminus \mathcal{H}_2$, then if $U(v_3)\geq 1$ ($U(v_1)\geq 1$ is similar), set $c_{\ell}(H_2)=a_3a_21a_2a_3a_1a_1a_2$ and $\{1\}\subseteq T(v_3)$; if $U(v_4)\geq 1$, set $c_{\ell}(H_2)=a_1a_3a_21a_2a_3a_3a_1$ and $\{1\}\subseteq T(v_4)$; if $U(v_6)\geq 2$ ($U(v_5)\geq 2$ is similar), set $c_{\ell}(H_2)=a_31a_22a_2a_3a_11$ and $\{1,2\}\subseteq T(v_3)$.

If $G\in \mathcal{H}_3^{*}\setminus \mathcal{H}_3$, then if $U(v_2)\geq 2$, set $c_{\ell}(H_3)=a_12a_2a_32a_2a_11$ and $\{1,2\}\subseteq T(v_2)$; if $U(v_5)\geq 2$, set $c_{\ell}(H_3)=2a_2a_31a_2a_11a_3$ and $\{1,2\}\subseteq T(v_5)$; if $U(v_5)+U(v_6)\geq 2$, set $c_{\ell}(H_3)=1a_12a_2a_3a_1a_2a_3$ and $\{1\}\subseteq T(v_6)$, $\{2\}\subseteq T(v_5)$; if $U(v_3)\geq 1$, set $c_{\ell}(H_3)=a_1a_2a_21a_1a_3a_3a_2$ and $\{1\}\subseteq T(v_3)$; if $U(v_4)\geq 1$, set $c_{\ell}(H_3)=a_2a_11a_1a_2a_3a_3a_2$ and $\{1\}\subseteq T(v_4)$.

If $G\in \mathcal{H}_4^{*}\setminus \mathcal{H}_4$, then there are four cases for the graph $G$: (1) $U(v_i)\geq 3$ ($i=2,3,4$); (2) $U(v_i)\geq 2$ ($i=1,5$); (3) $U(v_i)+U(v_j)\geq 2$ ($i\in\{1,5\}$, $j\in\{2,3,4\}$); (4) $U(v_i)\geq 2$ and $U(v_j)\geq 2$, $i,j\in\{2,3,4\}$. If $G$ is a graph in case (1), then there exists a graph in $\mathcal{G}_3^{*}\setminus \mathcal{G}_3$ which is a subgraph of $G$. Thus the result is obvious. If $U(v_1)\geq 2$, set $c_{\ell}(H_4)=a_2a_2a_1a_121a_2$ and $\{1,2\}\subseteq T(v_1)$; if $U(v_1)+U(v_2)\geq 2$, set $c_{\ell}(H_4)=a_1a_2a_22a_21a_1$ and $\{1\}\subseteq T(v_1)$, $\{2\}\subseteq T(v_2)$; if $U(v_2)\geq 2$ and $U(v_4)\geq 2$, set $c_{\ell}(H_4)=3421a_2a_2a_1$ and $\{1,2\}\subseteq T(v_2)$, $\{3,4\}\subseteq T(v_4)$.

If $G\in \mathcal{H}_5^{*}\setminus \mathcal{H}_5$, then $U(v_i)\geq 1$ ($i=2,4,6$) or $U(v_i)\geq 2$ ($i=1,3,5$). If $G$ is a graph in the former case, there exists a graph in $\mathcal{G}_6^{*}\setminus \mathcal{G}_6$ which is a subgraph of $G$.
If $U(v_3)\geq 2$, set $c_{\ell}(H_5)=a_22a_1a_2a_31a_3a_2$ and $\{1,2\}\subseteq T(v_3)$; if $U(v_5)\geq 2$, set $c_{\ell}(H_5)=1a_22a_1a_3a_2a_3a_1$ and $\{1,2\}\subseteq T(v_5)$;

If $G\in \mathcal{H}_6^{*}\setminus \mathcal{H}_6$, then $U(v_3)\geq 1$ or $U(v_i)\geq 2$ ($i=1,2,4,5$) or $U(v_1)+U(v_5)\geq 2$. If $U(v_3)\geq 1$, set $c_{\ell}(H_6)=a_2a_11a_1a_2a_2a_1$ and $\{1\}\subseteq T(v_3)$;
if $U(v_1)\geq 2$, set $c_{\ell}(H_6)=a_2a_1a_11a_212$ and $\{1,2\}\subseteq T(v_1)$; if $U(v_2)\geq 2$, set $c_{\ell}(H_6)=a_21a_1a_1a_212$ and $\{1,2\}\subseteq T(v_2)$;
if $U(v_1)+U(v_5)\geq 2$, set $c_{\ell}(H_6)=a_1a_1a_2a_221a_1$ and $\{1\}\subseteq T(v_1)$, $\{2\}\subseteq T(v_5)$;

If $G\in \mathcal{H}_7^{*}\setminus \mathcal{H}_7$, then $U(v_i)\geq 2$ ($i=1,2,3,4,5$) or $U(v_1)+U(v_2)\geq 2$ or $U(v_2)+U(v_3)\geq 2$ or $U(v_4)+U(v_5)\geq 2$ or $U(v_2)+U(v_4)\geq 2$ or $U(v_3)+U(v_5)\geq 2$ or $U(v_1)+U(v_5)\geq 2$. If $U(v_1)\geq 2$, set $c_{\ell}(H_7)=a_1a_2a_212a_1a_1$ and $\{1,2\}\subseteq T(v_3)$;
if $U(v_2)\geq 2$, set $c_{\ell}(H_7)=a_2a_1a_1a_1a_212$ and $\{1,2\}\subseteq T(v_2)$; if $U(v_3)\geq 2$, set $c_{\ell}(H_7)=a_2a_1a_12a_1a_21$ and $\{1,2\}\subseteq T(v_3)$;
if $U(v_1)+U(v_2)\geq 2$, set $c_{\ell}(H_7)=a_2a_1a_1a_1a_221$ and $\{1\}\subseteq T(v_1)$, $\{2\}\subseteq T(v_2)$; if $U(v_2)+U(v_3)\geq 2$, set $c_{\ell}(H_7)=a_2a_1a_12a_2a_21$ and $\{1\}\subseteq T(v_2)$, $\{2\}\subseteq T(v_3)$; if $U(v_2)+U(v_4)\geq 2$, set $c_{\ell}(H_7)=a_212a_1a_2a_1a_2$ and $\{1\}\subseteq T(v_2)$, $\{2\}\subseteq T(v_4)$;

If $G\in \mathcal{H}_8^{*}\setminus \mathcal{H}_8$, then $U(v_i)\geq 3$ ($i=1,2,3,4$) or $U(v_i)+U(v_j)+U(v_k)\geq 4, ~i,j,k=1,2,3,4$. If $G$ is a graph in the former case, then a graph belonging to $\mathcal{G}_4^{*}\setminus \mathcal{G}_4$ is a subgraph of $G$. If $U(v_1)+U(v_2)+U(v_4)\geq 4$, set $c_{\ell}(H_8)=1a_1a_1423$ and $\{1,2\}\subseteq T(v_1)$, $\{3\}\subseteq T(v_2)$, $\{4\}\subseteq T(v_4)$; if $U(v_2)+U(v_3)\geq 4$, set $c_{\ell}(H_8)=12a_1a_134$ and $\{1,2\}\subseteq T(v_2)$, $\{3,4\}\subseteq T(v_3)$.

It is not hard to check that the colorings above make $G$ rainbow connected with $n-3$ colors, thus $rx_3(G)\leq n-3$.

Conversely, let $G$ be a tricyclic graph such that $G\in \mathcal{H}$. Similar to Lemma \ref{lem8}, we only need to consider the case that $G$ has cut edges and $|A_1\cap A_2|\geq 1$. Assume, to the contrary, that $rx_3(G)\leq n-3$. Then there exists a rainbow coloring $c$ of $G$ using colors in $N_{n-3}$.

For $G\in \mathcal{H}_1$, if there is a nontrivial path $P'$ connecting $B_1$ and $B_2$ in $G$, then denote its ends by $v_3'(\in B_1)$ and $v_3''(\in B_2)$ and if there is a nontrivial path $P''$ connecting $B_2$ and $B_3$ in $G$, then denote its ends by $v_5'(\in B_2)$ and $v_5''(\in B_3)$. Similar to Claim 2 in Lemma \ref{lem8}, the colors in the path $P^{'}$ and $P^{''}$ can not appear in $A_1$, which implies $|A_1\cap A_2|=0$, contradicting to $|A_1\cap A_2|\geq 1$. For $G\in \mathcal{H}_5$, notice that $d_{H_5}(v_1,v_3,v_5)= 4$ and $|A_1\setminus A_2|=3$, the result holds. The same argument applies to the case when $G\in \mathcal{H}_6$. Thus, we mainly discuss the rest cases for $G$ as follows.

{\bf Case 1.}~~$G\in \mathcal{H}_2$. we have $1\leq|A_1\cap A_2|\leq4$ and $|A_1\setminus A_2|=3$. If there is a nontrivial path $P'$ connecting $B_1$ and $B_2$ in $G$, then denote its ends by $v_4'(\in B_1)$ and $v_4''(\in B_2)$. We can also claim that $c(P')\cap A_1= \emptyset$. Noticing that $d_{H_2}(v_2,v_5,v_6)=4>3$, we only check the case when $W(v_2)\geq 2$.  Since the case of $|A_1\cap A_2|=1$ or $|A_1\cap A_2|=4$ is easy to check, we consider the remaining two cases. Assume $|A_1\cap A_2|=2$, $A_1=\{1,2,a_1,a_2,a_3\}$ and $1,2\in T(v_2)$, consider $\{y_1,y_2,v_5\}$ and we may set $c(v_2v_3)=a_1$, $c(v_3v_4')=a_2$, $c(v_4''v_5)=a_3$. If 1 and 2 are in $B_1$, and 1 appears in $v_1v_2$ or $v_1v_4'$, then we have $c(v_4''v_6)=a_3$ by considering $\{y_1,y_2,v_6\}$, and thus $c(v_5v_6)\notin A_2$, but now $\{y_1,v_5,v_6\}$ has no rainbow tree. So one of 1, 2, say 1, is in $B_2$ and $c(v_5v_6)=1$. Now we have $c(v_4''v_6)\neq a_3$ and $c(v_1v_2),c(v_1v_4'),c(v_4''v_6)\in \{a_1,a_2,a_3\}$ by considering $\{y_1,y_2,v_6\}$. Then every $\{y_1,v_5,v_6\}$-tree of size 5 can not have the color 2. Thus there is no rainbow $\{y_1,v_5,v_6\}$-tree.

Assume then $|A_1\cap A_2|=3$ and $A_1=\{1,2,a_1,a_2,a_3\}$.
If $1,2,3\in T(v_2)$, first we claim that $v_2v_3,v_3v_4'$ can not use colors from $A_2$ both. Otherwise assume $c(v_2v_3)=1$, $c(v_3v_4)=2$, $c(v_1v_2)=a_1$, $c(v_1v_4')=a_2$, and by considering $\{y_1,y_2,v_5\}$ and $\{y_1,y_2,v_6\}$, we have $c(v_4''v_5)=c(v_4''v_6)=a_3$, and $c(v_5v_6)$ can be $a_1$ or $a_2$. However, there is no rainbow $\{y_1,v_5,v_6\}$-tree or $\{y_2,v_5,v_6\}$-tree. With the same reason, we conclude that exactly one edge of the unique 4-cycle of $H_2$ can be colored with a color from $A_2$.  Thus there are two cases by symmetry. If $c(v_1v_2)=1$, $c(v_1v_3)=2$, $c(v_5v_6)=3$. Consider $y_1$ and $y_2$, together with $v_5$, $v_6$ respectively, we have $c(v_4''v_5)=c(v_4''v_6)$, which is impossible. If $c(v_1v_4')=1$, $c(v_1v_3)=2$, $c(v_5v_6)=3$. Consider $y_1$ and $y_3$, together with $v_5$, $v_6$ respectively. suppose $c(v_4''v_5)=a_1$, $c(v_4''v_6)=a_2$ and $c(v_3v_4')=a_3$, $c(v_1v_2),c(v_2v_3)\in \{a_1,a_2\}$, but now there is no rainbow $\{y_3,v_5,v_6\}$-tree.
If $1,2\in T(v_2)$, $3\in T(v_6)$, similarly set $c(v_1v_2)=a_1$, $c(v_1v_4')=a_2$, $c(v_4''v_6)=a_3$. First we can easily claim that the color 3 can not appear in $B_2$. Thus there are three possibilities for the color 3. If $c(v_3v_4')=3$, then consider $\{y_1,y_3,v_3\}$ and $\{y_2,y_3,v_3\}$, we have $c(v_1v_3),c(v_2v_3)\in \{1,2\}$. Consider $\{y_1,y_2,v_5\}$, one of $c(v_4''v_5)$ and $c(v_5v_6)$ is $a_3$, but now there is no rainbow $\{y_2,y_3,v_5\}$-tree. The case when  $c(v_1v_3)=3$ is similar to the case of $c(v_3v_4')=3$. If $c(v_2v_3)=3$, then similarly we get $c(v_1v_3),c(v_2v_3)\in \{1,2\}$ and one of $c(v_4''v_5)$ and $c(v_5v_6)$ is $a_3$. Consider $\{y_1,y_3,v_5\}$, this forces one of $c(v_4''v_5)$ and $c(v_5v_6)$ is $2$, it is impossible.

{\bf Case 2.}~~$G\in \mathcal{H}_3$. Then $1\leq|A_1\cap A_2|\leq 4$ and $|A_1\setminus A_2|=3$. If there is a nontrivial path $P'$ connecting $B_1$ and $B_2$ in $G$, then denote its ends by $v_4'(\in B_1)$ and $v_4''(\in B_2)$. Similarly, it is easy to check that $c(P^{'})\cap A_1= \emptyset$. We only focus on the case that $|A_1\cap A_2|=1$, where $A_1=\{1,a_1,a_2,a_3\}$. If $1\in T(v_6)$, consider $\{y_1,v_1,v_3\}$, set $c(v_4''v_6)=a_1$, $c(v_1v_4')=a_2$, $c(v_3v_4')=a_3$. Then $c(v_5v_6)\neq 1$, otherwise there is no rainbow tree connecting $\{y_1,v_1,v_5\}$ or $\{y_1,v_3,v_5\}$ depending on $c(v_4''v_5)$. Similarly, $c(v_4''v_5)\neq 1$. Next $c(v_2v_4')\neq 1$ by considering $\{y_1,v_2,v_5\}$. Suppose $c(v_1v_2)=1$, to make sure there is a rainbow $\{y_1,v_1,v_2\}$-tree and $\{y_1,v_2,v_3\}$-tree, we have $c(v_2v_4')=a_3$ and $c(v_2v_3)=a_2$. But now $\{v_2,v_3,v_5\}$ has no rainbow tree. If $1\in T(v_2)$, then consider $\{y_1,v_5,v_6\}$, $\{y_1,v_1,v_5\}$, $\{y_1,v_1,v_6\}$, $\{y_1,v_3,v_5\}$, $\{y_1,v_3,v_6\}$ successively. Set $c(v_2v_4')=a_1$, then $c(v_1v_2)$, $c(v_1v_4')$, $c(v_2v_3)$, $c(v_3v_4')$, $c(v_4''v_5)$ and $c(v_4''v_6)$ can only be $a_2$ or $a_3$. It is easy to check that $\{v_2,v_3,v_5\}$ has no rainbow tree.

{\bf Case 3.}~~$G\in \mathcal{H}_4$. $1\leq |A_1\cap A_2|\leq 4$ and $|A_1\setminus A_2|=2$. First notice that  $d_{H_4}(v_2,v_3,v_4)=3$, the case that $W(v_1)=W(v_5)=0$, $W(v_2)$, $W(v_3)$, $W(v_4)\leq1$ is evident.
Assume $|A_1\cap A_2|=1$, then $1\in T(v_1)$, the case is similar with the case that $G\in \mathcal{G}_3$ in Lemma \ref{lem8}. Assume now $|A_1\cap A_2|=2$, if $1\in T(v_1)$, $2\in T(v_5)$, by considering all the trees containing $y_1$ and $y_2$, without loss of generality, set $c(v_1v_5)=a_1$, $c(v_1v_2)=1(2)$, $c(v_2v_5)=a_2$. Moreover, by considering $\{y_1(y_2),v_2,v_3\}$ and $\{y_1(y_2),v_2,v_4\}$, the remaining two paths of length 2 from $v_1$ to $v_5$ must be colored with 2(1), $a_2$, respectively. However, there is no rainbow $\{y_2(y_1),v_3,v_4\}$-tree.
If $1,2\in T(v_2)$, by considering $\{y_1,y_2,v_4\}$, set  $c(v_2v_5)=a_1$, $c(v_4v_5)=a_2$. Since the two possible rainbow trees connecting $\{y_1,v_3,v_4\}$ and $\{y_2,v_3,v_4\}$ are the same, we may set $c(v_3v_5)=1$. It is easy to see that $c(v_1v_2)$, $c(v_1v_3)$ cannot use colors from $A_2$ by considering $\{y_1,y_2,v_3\}$, and $c(v_1v_4)=2$ by considering $\{y_1,v_3,v_4\}$. But now if $c(v_1v_5)=1$ or $c(v_1v_5)=2$, there is no rainbow $\{y_1,v_2,v_5\}$-tree or $\{y_2,v_1,v_4\}$-tree, respectively.

Assume $|A_1\cap A_2|=3$, then $1,2\in T(v_2)$, $3\in T(v_4)$. Similarly as above, we may set $c(v_2v_5)=a_1$, $c(v_4v_5)=a_2$, $c(v_3v_5)=1$, $c(v_1v_5)=3$, one of $c(v_1v_2)$, $c(v_1v_4)$ is 2. However, there is no rainbow $\{y_2,y_3,v_1\}$-tree.

Finally assume $|A_1\cap A_2|=4$ and $1,2\in T(v_2)$, $3\in T(v_3)$, $4\in T(v_4)$, consider $\{y_1,y_3,y_4\}$ and $\{y_2,y_3,y_4\}$, at least four of the non-cut edges must be colored with $\{a_1,a_2\}$. This contradicts to $|A_1\cap A_2|=4$.

{\bf Case 4.}~~$G\in \mathcal{H}_7$. Since $d_{H_7}(v_1,v_3,v_4)=3$, we only focus on the case $1\in T(v_2)$. Consider all the three vertices containing $y_1$, it is not hard to obtain a contradiction.

{\bf Case 5.}~~$G\in \mathcal{H}_8$. First notice $d_{H_8}(v_1,v_2,v_3)=2$, the case that $W(v_1)$, $W(v_2)$, $W(v_3)$, $W(v_4)\leq1$ is evident. Assume $|A_1\cap A_2|=2$, then $1,2\in T(v_1)$. Consider $\{y_1,y_2,v_2\}$, $\{y_1,y_2,v_3\}$, $\{y_1,y_2,v_4\}$ successively, we have $c(v_1v_2)=c(v_1v_3)=c(v_1v_4)=a_1$. However, there is no rainbow tree connecting $\{y_1,v_2,v_3\}$ or $\{y_2,v_2,v_3\}$, a contradiction. Now focus on $|A_1\cap A_2|=3$, then $1,2\in T(v_1)$, $3\in T(v_2)$. Consider $\{y_1,y_3,v_3\}$, $\{y_1,y_3,v_4\}$, $\{y_2,y_3,v_3\}$ and $\{y_2,y_3,v_4\}$ successively, $c(v_1v_3)$, $c(v_1v_4)$, $c(v_2v_3)$, $c(v_2v_4)$ must be 1 or 2. Again, there is no rainbow $\{y_1,y_2,v_3\}$-tree.

By the detailed analysis above, we have $rx_3(G)\geq n-2$ for $G\in \mathcal{H}$. By Theorem \ref{thm4}, it follows that $rx_3(G)=n-2$.
\end{proof}

\subsubsection{Characterize the graphs with $rx_3(G)\bf{=n-2}$}
We begin with a lemma about a connected 5-cyclic graph.

\begin{lem}\label{lem11}
Let $G$ be a connected 5-cyclic graph of order $n$. Then $rx_3(G)=n-2$ if and only if $G=K_5-e$.
\end{lem}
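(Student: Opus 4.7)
The forward direction is immediate: $c(K_5-e)=9-5+1=5$, so $K_5-e$ is 5-cyclic of order $5$, and Theorem \ref{thm8} gives $rx_3(K_5-e)=5-2=n-2$.

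For the converse, let $G$ be a connected 5-cyclic graph with $rx_3(G)=n-2$, and let $G_0$ be its basic graph. Since contracting cut edges preserves the cyclomatic number, $c(G_0)=5$. Writing $s=|G|-|G_0|$ for the number of cut edges of $G$, Lemma \ref{lem1} gives $n-2=rx_3(G)\le rx_3(G_0)+s$, so $rx_3(G_0)\ge |G_0|-2$; combined with Theorem \ref{thm8} (applicable because $G_0$ is 2-edge-connected), this forces $rx_3(G_0)=|G_0|-2$. Theorem \ref{thm8} then restricts $G_0$ to be $C_{|G_0|}$ or a spanning subgraph of the 3-sun, $K_5-e$, $K_4$, $G_1$, $G_2$, $H_1$, $H_2$, or $H_3$. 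Computing cyclomatic numbers for these candidates ($1,4,5,3,2,2,3,3,3$ respectively) and recalling that spanning subgraphs only decrease $c$, the only choice realizing $c(G_0)=5$ is $G_0=K_5-e$ itself.

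It remains to rule out cut edges in $G$. Suppose for contradiction $s\ge 1$, and let $e=xy$ be a cut edge of $G$ with $y$ a leaf. Contracting every cut edge of $G$ except $e$ produces a graph $G'$ isomorphic to $K_5-e$ with a single pendant vertex $y$ attached at some vertex $v$, so $|G'|=6$. The central subclaim is that $rx_3(G')\le 3$; granting this, Lemma \ref{lem1} gives $rx_3(G)\le rx_3(G')+(|G|-|G'|)\le 3+(n-6)=n-3$, contradicting $rx_3(G)=n-2$. Hence $s=0$ and $G=G_0=K_5-e$.

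The hard step is the subclaim $rx_3(K_5-e+\text{pendant})\le 3$. Labeling the vertices of $K_5-e$ as $v_1,\dots,v_5$ with $v_1v_2$ the missing edge, the automorphism group has two vertex orbits, $\{v_1,v_2\}$ (each of degree 3) and $\{v_3,v_4,v_5\}$ (each of degree 4), so it suffices to treat the two cases where the pendant is attached at $v_1$ or at $v_3$. For each case the plan is to exhibit an explicit 3-edge-coloring of the ten edges of $G'$ and verify that every one of the twenty 3-subsets of $V(G')$ admits a rainbow tree. The delicate 3-subsets are those of the form $\{y,a,b\}$ with $a,b\in V(K_5-e)$: since $vy$ is a cut edge, every rainbow $\{y,a,b\}$-tree must contain $vy$, so the remaining subtree spanning $\{v,a,b\}$ in $K_5-e$ may use only the two colors distinct from $c(vy)$ and hence must be a 2-edge path through these three vertices. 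This rigidity essentially pins down the coloring of the edges incident to $v$, and the remaining verification for 3-subsets inside $K_5-e$ is routine given that $K_5-e$ is 3-rainbow connected with $3$ colors by Theorem \ref{thm8}.
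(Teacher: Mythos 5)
Your proposal is correct and follows the same basic route as the paper: pass to the basic graph $G_0$, invoke Theorem \ref{thm8} together with Lemma \ref{lem1}, and note that among $C_n$, the 3-sun, $K_5-e$, $K_4$, $G_1$, $G_2$, $H_1$, $H_2$, $H_3$ only $K_5-e$ has cyclomatic number $5$ (and a proper spanning subgraph of it cannot). In fact you are more careful than the paper's own two-line argument: when $G_0=K_5-e$ but $G$ has cut edges, Lemma \ref{lem1} combined with $rx_3(G_0)=3$ only gives $rx_3(G)\le 3+(n-5)=n-2$, which is no contradiction; the paper quietly defers this case to the proof of Theorem \ref{thm2} (``by the similar argument with $t=4$''), whereas you isolate it and reduce it correctly to the subclaim that $K_5-e$ with one pendant vertex is $3$-rainbow connected with $3$ colors. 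The one thing you leave undone is that this subclaim is stated as a ``plan'' rather than proved; it is true, and a witness for each of the two vertex orbits (with $v_1v_2$ the missing edge, $y$ the pendant) is as follows. Pendant at $v_3$: $c(v_3y)=c(v_1v_5)=c(v_2v_4)=c(v_4v_5)=1$, $c(v_1v_3)=c(v_3v_4)=c(v_2v_5)=2$, $c(v_2v_3)=c(v_3v_5)=c(v_1v_4)=3$. Pendant at $v_1$: $c(v_1y)=c(v_3v_4)=c(v_4v_5)=1$, $c(v_1v_3)=c(v_1v_5)=c(v_2v_4)=2$, $c(v_1v_4)=c(v_2v_3)=c(v_2v_5)=c(v_3v_5)=3$. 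Every triple $\{y,a,b\}$ is then connected by the pendant edge plus a two-edge path colored $\{2,3\}$, exactly as your rigidity argument predicts, and the triples inside $K_5-e$ check out (the only nontrivial ones need a three-edge path, e.g.\ $v_5v_3v_4v_2$ for $\{v_2,v_3,v_5\}$ in the second coloring). With that inserted your proof is complete. Your treatment of the forward direction via the equality case of Theorem \ref{thm8} is also fine; the paper instead gets $rx_3(K_5-e)=3$ from Theorem \ref{thm3} (lower bound) and Observation \ref{obs3} applied to the spanning cycle $C_5$ (upper bound), which is an equivalent but independent argument.
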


\begin{proof}
Let $G\neq K_5-e$ and $rx_3(G)=n-2$, by Lemma \ref{lem1} and Theorem \ref{thm8}, $rx_3(G)\leq n-3$, a contradiction. Conversely, suppose $G=K_5-e$, by Theorem \ref{thm3}, $rx_3(G)\geq3$, on the other hand, $rx_3(G)\leq rx_3(C_5)=3$. Thus $rx_3(G)=n-2$.

\end{proof}

For $n\geq 3$, the $wheel$ $W_n$ is a graph constructed by joining a vertex $v_0$ to every vertex of a cycle $C_{n}:v_1,v_2,\cdots,v_{n},v_{n+1}=v_1$.

A third graph class is defined as follows. Let $\mathcal{J}_1$ be a class of graphs such that every graph  is obtained from a graph in $\mathcal{H}_5$ by adding an edge $v_4v_6$. Let $\mathcal{J}_2$ be a class of graphs such that every graph is obtained from a graph in $\mathcal{H}_7$ where $U(v_2)=0$ and $U(v_5)=0$ by adding an edge $v_2v_5$. Set $\mathcal{J}=\{\mathcal{J}_1,\mathcal{J}_2, W_4\}$.

Now we are ready to show our second main theorem of this paper.

\begin{thm}\label{thm2}
Let $G$ be a connected graph of order $n$ $(n\geq 6)$. Then $rx_3(G)=n-2$ if and only if $G$ is unicyclic with the girth of $G$ at least 4 or  $G\in \mathcal{G}\cup \mathcal{H}\cup \mathcal{J}$ or $G=K_5-e$.
\end{thm}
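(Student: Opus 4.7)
My plan is to prove both directions of the ``if and only if'' via a case analysis on the cyclomatic number $c(G)$, exploiting the fact that contracting the cut edges of $G$ to form the basic graph $G_0$ preserves $c(G)$ and, by Lemma \ref{lem1}, reduces $rx_3$ by exactly $n-|G_0|$ when $rx_3(G)=n-2$.

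For sufficiency, each subclass has already been handled by a previous result: the unicyclic case with girth at least $4$ by Theorem \ref{thm1}; the bicyclic class $\mathcal{G}$ by Lemma \ref{lem8}; the tricyclic class $\mathcal{H}$ by Lemma \ref{lem9}; and the $5$-cyclic case $G=K_5-e$ by Lemma \ref{lem11}. The only additional work is for the new class $\mathcal{J}$. For the upper bound $rx_3(G)\leq n-2$ I would use Observation \ref{obs3}: each $G\in\mathcal{J}_1$ (respectively $\mathcal{J}_2$) contains a spanning subgraph belonging to $\mathcal{H}_5$ (respectively $\mathcal{H}_7$), so Lemma \ref{lem9} delivers the bound; the graph $W_4$ contains a spanning $5$-cycle, so Theorem \ref{thm1} gives $rx_3(W_4)\leq rx_3(C_5)=3$. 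For the matching lower bound I would mimic the cut-edge color analysis of Lemma \ref{lem9}, using the bookkeeping $A_1,A_2,U(v),W(v)$ introduced in Section~2 to rule out any coloring using only $n-3$ colors; for $W_4$ the lower bound $rx_3(W_4)\geq 3$ follows immediately from Theorem \ref{thm3} since $W_4$ is neither $K_5$ nor of order at most $4$.

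For necessity, assume $rx_3(G)=n-2$. Lemma \ref{lem1} together with Theorem \ref{thm8} forces $rx_3(G_0)=|G_0|-2$, so $G_0$ lies on the short list of $2$-edge-connected graphs appearing in Theorem \ref{thm8}. Since contracting cut edges preserves the cyclomatic number, $c(G)=c(G_0)$, and I dispose of the easy values of $c(G)$ as follows: $c(G)=0$ contradicts Proposition \ref{pro1}; $c(G)=1$ forces girth at least $4$ by Theorem \ref{thm1}; $c(G)=2$ gives $G\in\mathcal{G}$ by Lemma \ref{lem8}; $c(G)=3$ gives $G\in\mathcal{H}$ by Lemma \ref{lem9}; $c(G)=5$ gives $G=K_5-e$ by Lemma \ref{lem11}; and $c(G)\geq 6$ is impossible since every graph appearing in Theorem \ref{thm8}'s list has cyclomatic number at most $5$, the maximum being attained only by $K_5-e$.

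The remaining and main case is $c(G)=4$. Here $G_0$ must be a $2$-edge-connected $4$-cyclic graph with $rx_3(G_0)=|G_0|-2$, which by inspection of Theorem \ref{thm8}'s list is one of a small number of candidates: the $3$-sun (which arises as the basic graph of $\mathcal{J}_1$, corresponding to augmenting $\mathcal{H}_5$ by the chord $v_4v_6$), the $4$-cyclic graph $H_7+v_2v_5$ (which is the basic graph of $\mathcal{J}_2$), and the $8$-edge spanning subgraphs of $K_5-e$ (of which $W_4$ is the only one that survives). For each such $G_0$ I would then enumerate the ways cut edges can be attached and, via a color-counting argument parallel to the proofs of Lemmas \ref{lem8} and \ref{lem9}, determine precisely which configurations of pendant trees keep $rx_3$ at $n-2$. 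The hardest step will be this exhaustive enumeration, because one must match each surviving configuration against the detailed $U(v)$ constraints that define $\mathcal{J}_1$ and $\mathcal{J}_2$ and verify that no other attachment of cut edges produces a $4$-cyclic graph with $rx_3=n-2$.
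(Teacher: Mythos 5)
Your proposal follows essentially the same route as the paper: reduce to the basic graph via Lemma \ref{lem1} and Theorem \ref{thm8}, dispose of $c(G)\le 3$ and $c(G)\ge 5$ with Proposition \ref{pro1}, Theorem \ref{thm1}, Lemmas \ref{lem8}, \ref{lem9} and \ref{lem11}, and settle $c(G)=4$ by the same cut-edge/color-counting analysis used for the bicyclic and tricyclic cases. The paper likewise leaves the $t=4$ enumeration at the level of ``similar arguments'' (recording only the explicit coloring that eliminates $W_4$ with pendant edges), so your deferral of that step matches the published level of detail, and your use of Observation \ref{obs3} for the upper bound on $\mathcal{J}$ is an equally valid minor variant.
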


\begin{proof}
Let $G$ be a $t$-cyclic graph with $rx_3(G)=n-2$, but not a graph listed in the theorem. By Proposition \ref{pro1}, Theorem \ref{thm1}, Lemma \ref{lem8} and Lemma \ref{lem9}, we need to consider the cases $t\geq 4$. If $t=4$, by Theorem \ref{thm8}, the basic graph of $G$ should be a 3-sun or the basic graph of $\mathcal{J}_2$ or $W_4$. If $G\notin \mathcal{J}_1$ or $G\notin \mathcal{J}_2$, then by the similar arguments with Lemma \ref{lem8}, we have  $rx_3(G)\leq n-3$, a contradiction. If the basic graph of $G$ is $W_4$ and there are some cut edges in $G$. If $U(v_0)\geq1$, then a graph belonging to $\mathcal{G}_6^{*}\setminus \mathcal{G}_6$ and satisfying $U(v_3)\geq1$ is a subgraph of $G$. If $U(v_1)\geq1$(other cases are similar), then set $c_{\ell}(W_4)=a_21a_1a_1a_1a_2a_2a_1$ and $\{1\}\subseteq T(v_1)$. If $t\geq 5$, by Theorem \ref{thm8}, the basic graph of $G$ should be $K_5-e$, since $n\geq 6$, by the similar argument with $t=4$, we have $rx_3(G)\leq n-3$, a contradiction.

Conversely, by Theorem \ref{thm1}, Theorem \ref{thm3}, Lemma \ref{lem8}, Lemma \ref{lem9} and Lemma \ref{lem11}, suppose $G$ is a graph such that $G\in \mathcal{J}_1$ or $G\in \mathcal{J}_2$. Assume, to the contrary, that $rx_3(G)\leq n-3$. Then there exists a rainbow coloring $c$ of $G$ using $n-3$ colors. Both cases can be considered similar to the case that $G\in \mathcal{H}_5$ or $G\in \mathcal{H}_7$ in Lemma \ref{lem9}, a contradiction.
\end{proof}

\end{document}